\begin{document}
\theoremstyle{plain}
\newtheorem{definition}{Definition}
\newtheorem{theorem}{Theorem}
\newtheorem{proposition}[definition]{Proposition}
\newtheorem{lemma}[definition]{Lemma}
\newtheorem{corollary}[definition]{Corollary}
\newtheorem{example}[definition]{Example}
\newtheorem{conjecture}{Conjecture}
\newtheorem{problem}{Problem}
\newtheorem{assumption}[definition]{Assumption}
\newtheorem*{discussion}{Discussion}
\newtheorem{question}[equation]{Question}
\newtheorem{remark}[definition]{Remark}
\newtheorem*{notations}{Notations}

\numberwithin{equation}{section}
\numberwithin{definition}{section}
\errorcontextlines=0

\newcommand{\C}{\mathbb{C}}
\newcommand{\R}{\mathbb{R}}
\newcommand{\N}{\mathbb{N}}
\newcommand{\E}{\mathbb{E}}
\newcommand{\Z}{\mathbb{Z}}
\newcommand{\T}{\mathbb{T}}
\newcommand{\Ensp}{\mathcal{D}_{\mathcal{E}}(\Omega)}
\renewcommand{\o}{\text{o}}
\newcommand{\ord}{\text{ord}}
\renewcommand{\d}{\partial}
\newcommand{\supp}{\text{Supp}}
\newcommand{\phg}{\text{phg}}
\newcommand{\dom}{\mathcal{D}(\Delta_\Omega)}
\newcommand{\domM}{\mathcal{D}(\Delta_M)}
\newcommand{\domq}{\mathcal{Q}(\Omega)}
\newcommand{\domqhat}{\mathcal{D}(\widehat{q}_\Omega)}
\newcommand{\domqMhat}{\mathcal{D}(\widehat{q}_M)}
\newcommand{\domqDhat}{\mathcal{D}(\widehat{q}_D)}
\newcommand{\domU}{\mathcal{D}(\Delta_U)}
\renewcommand{\hom}{\text{hom}}
\newcommand{\sR}{\text{sR}}
\newcommand{\Op}{\text{Op}}
\newcommand{\J}{\mathcal{J}}
\newcommand{\e}{\varepsilon}
\renewcommand{\leq}{\leqslant}
\newcommand{\dist}{{\rm dist}}
\renewcommand{\Box}{{\rm Box}}
\renewcommand{\geq}{\geqslant}
\renewcommand{\div}{\text{div}}
\newcommand{\todo}[1]{$\clubsuit$ {\tt #1}}
\newcommand{\mult}{\text{mult}}

\title{Nodal sets of eigenfunctions of sub-Laplacians}
\date{\today}
\author{Suresh ESWARATHASAN\footnote{Dalhousie University, Halifax NS B3H 4R2 (\texttt{sr766936@dal.ca})} \ and Cyril LETROUIT\footnote{Massachussets Institute of Technology, Cambridge MA 02139 (\texttt{letrouit@mit.edu})}}
\maketitle
\begin{abstract}
Nodal sets of eigenfunctions of elliptic operators on compact manifolds have been studied extensively over the past decades. In this note, we initiate the study of nodal sets of eigenfunctions of hypoelliptic operators on compact manifolds, focusing on sub-Laplacians. A standard example is the sum of squares of bracket-generating vector fields on compact quotients of the Heisenberg group.  Our results show that nodal sets behave in an anisotropic way which can be analyzed with standard tools from sub-Riemannian geometry such as sub-Riemannian dilations, nilpotent approximation and desingularization at singular points. Furthermore, we provide a simple example demonstrating that for sub-Laplacians, the Hausdorff measure of nodal sets of eigenfunctions cannot be bounded above by $\sqrt\lambda$, which is the bound conjectured by Yau for Laplace-Beltrami operators on smooth manifolds.
\end{abstract}

\section{Introduction and main results}
\subsection{Eigenfunctions and nodal sets}
Let $\Omega$ be a bounded open subset of $\R^N$ and let $\Delta$ be the (non-positive) Laplacian on $\Omega$ with Dirichlet  boundary conditions on $\partial\Omega$. It has a compact resolvent, and the spectrum of $-\Delta$, denoted by
$$
0\leq \lambda_1\leq \lambda_2\leq \ldots\leq \lambda_n\leq \ldots\rightarrow +\infty,
$$
 is discrete and tends to $+\infty$. The nodal set $Z_{\varphi_\lambda}=\varphi_\lambda^{-1}(0)$ of a mode $\varphi_\lambda$ satisfying $-\Delta \varphi_\lambda=\lambda \varphi_\lambda$ is physically interpreted as the set of nodes of the vibration profile $\varphi_\lambda$ of a drum of shape $\Omega$, and it can be observed, when $N=2$, by pouring sand on a vibrating drum of shape $\Omega$ (the famous Chladni experiment). 

On the mathematical side, nodal sets of eigenfunctions on Euclidean domains and Riemannian manifolds have been studied extensively over the past decades. Let us only mention Courant's nodal domain theorem and Pleijel's asymptotic bound on the number of nodal domains, Donnelly and Fefferman's proof of Yau's conjecture in the analytic setting, and the recent advances by Logunov and Malinnikova concerning the smooth case of the Yau conjecture. For all these results and a global overview of the subject we refer the reader to \cite{log} and the references therein. A stream of results has also emerged in the past twenty years regarding nodal sets of random waves (i.e., random linear combinations of eigenfunctions); for this we refer the reader to the recent survey \cite{wig} and the references therein.

In the present paper, we initiate the study of nodal sets of eigenfunctions for operators more general than Laplacians, specifically a certain class of  \textit{non-elliptic} operators called sub-Laplacians. Sub-Riemannian geometry is a natural extension of Riemannian geometry whose tools are suited to the analysis of sub-Laplacians.

\subsection{Sub-Laplacians}\label{s:subLap}
We start with the general definition of sub-Laplacians, and give immediately after concrete examples of sub-Laplacians.

Let $N\in\mathbb{N}^*=\mathbb{N}\setminus\{0\}$ and let $M$ be either $\R^N$ or a smooth, connected, compact manifold of dimension $N$ without boundary, endowed with a smooth volume $\mu$. Let $X_1,\ldots, X_m$ be smooth vector fields on $M$ satisfying Hörmander's bracket-generating condition: 
\begin{equation}\label{e:hormander}
\begin{split}
&\text{The vector fields $X_1,\ldots,X_m$ and their iterated Lie brackets $[X_i,X_j], [X_i,[X_j,X_k]]$, etc.}\\
 &\qquad\qquad\qquad \text{span the tangent space $T_xM$ at every point $x\in M$.}
\end{split}
\end{equation}
In particular, $X_1,\ldots,X_m$ are not assumed to span $TM$. The sub-Laplacian $\Delta$ is then defined as
\begin{equation}\label{e:sLapla}
\Delta=-\sum_{i=1}^m X_i^*X_i
\end{equation}
where $X_i^*=-X_i-{\rm div}_\mu(X_i)$ is the adjoint of $X_i$ in $L^2(M,\mu)$. Sub-Laplacians are in general not elliptic since $X_1,\ldots,X_m$ are not assumed to span $TM$.

 Instead of the usual elliptic estimates, sub-Laplacians satisfy subelliptic estimates of the form\footnote{if $M=\R^N$, for any compact set $K\subset \R^N$ there exists a constant $C$ such that \eqref{e:subest} holds for any smooth function supported in $K$.} (see \cite[Inequality (3.4)]{hor}, \cite[Theorem 1.5]{lau})
\begin{equation}\label{e:subest}
\|u\|_{H^{2/r}(M)}\leq C(\|u\|_{L^2(M)}+\|\Delta u\|_{L^2(M)}),
\end{equation}
where $r\in\N^*$ is called the step of the sub-Laplacian $\Delta$ and will be defined later. We only mention that when $X_1,\ldots,X_m$ span $TM$, then $r=1$ and we recover usual elliptic estimates. In turn the estimate \eqref{e:subest} implies the following form of regularity, called hypoellipticity (see \cite{hor}): if $\Delta u\in C^\infty(U)$ for some open set $U\subset M$, then $u\in C^\infty(U)$. 

Sub-Laplacians are a natural generalization of Euclidean Laplacians and of the Laplace-Beltrami operator in Riemannian geometry.\footnote{To see that the Laplace-Beltrami operator on a Riemannian manifold $(M,g)$ is a sub-Laplacian, take $\mu$ to be the Riemannian volume and take a partition of unity $1=\sum_{k=0}^{K-1} \chi_k(x)^2$ where $\chi_k$ is smooth and supported in a chart where $g$ admits an orthonormal frame $(Y^k_1,\ldots,Y^k_N)$. Then set $m=KN$ and $X_{Nk+j}=\chi_kY^k_j$ for $j\in\{1,\ldots,N\}$ and $0\leq k\leq K-1$. One can check that with this construction $\sum_{i=1}^mX_i^*X_i=-\Delta_g$.} They have been studied extensively since the 1960's and Hörmander's seminal work \cite{hor}. Let us illustrate the definition with several examples.

\begin{example} \label{exGrushin}
On $M=(-1,1)_x\times \mathbb{T}_y$ (where $\mathbb{T}=\R/2\pi\Z$), we set $$\Delta_{BG}=\partial_x^2+x^2\partial_y^2.$$ This sub-Laplacian is the so-called Baouendi-Grushin operator. In this case, $X_1=\partial_x$, $X_2=x\partial_y$ and $\mu$ is the Lebesgue measure $dxdy$. Then $\Delta_{BG}$ is elliptic except along the singular line $x=0$, where $X_2=0$. Along this line, $X_1=\partial_x$ and $[X_1,X_2]=\partial_y$ span the tangent space, so that \eqref{e:hormander} is satisfied. 
\end{example}
More generally, for any $\alpha\in\N^*$ we can consider the sub-Laplacian $\partial_x^2+x^{2\alpha}\partial_y^2$, which is also elliptic except along $\{x=0\}$. In this case $X_1=\partial_x$, $X_2=x^\alpha\partial_y$, and the bracket $[X_1,[X_1,\ldots,[X_1,X_2]\ldots]$ where $X_1$ appears $\alpha$ times generates the missing direction $\partial_y$.

\begin{example} \label{exHeis}
Sub-Laplacians can be defined on real Heisenberg groups in a natural way. Given $d\in\N^*$, we denote by $\mathbf{H}_d$ the Heisenberg group of dimension $2d+1$, i.e., $\R^{2d+1}$ endowed with the group law 
\begin{equation}\label{e:grouplaw}
(x,y,z) \cdot (x',y',z'):=(x+x', y+y',z+z'- \sum_{j=1}^dx_jy_j'),
\end{equation}
where $x,y,x',y'\in\R^d$ and $z,z'\in\R$. When $d=1$, this group law comes from the representation of $\mathbf{H}_1$ as the group of $3\times 3$ matrices of the form
$$
\begin{pmatrix}
1&x&-z\\ 0&1&y\\0&0&1
\end{pmatrix}
$$
for $x,y,z\in\R$, endowed with the usual product of matrices (the higher-dimensional case has a similar representation).

The set of left-invariant vector fields on $\mathbf{H}_d$ is generated by $\partial_z$ together with the $2d$ vector fields
\begin{equation*}
X_j=\partial_{x_j}, \ \  \ Y_j=\partial_{y_j}-x_j\partial_{z}, \qquad \text{for } j=1,\ldots,d.
\end{equation*}
The Heisenberg group $\mathbf{H}_d$ admits lattices, such as $\Gamma=(\sqrt{2\pi}\Z)^{2d}\times 2\pi\Z$. Left-invariant vector fields can be considered as vector fields on the compact left-quotient $\Gamma\backslash \mathbf{H}_d$.
We define the sub-Laplacian on $\Gamma\backslash \mathbf{H}_d$ by
\begin{equation*}
\Delta_{\Gamma \backslash \textbf{H}_d}=\sum_{j=1}^d X_j^2+Y_j^2.
\end{equation*}
We note that $[X_j,Y_j]=-\partial_z$ for any $j$, hence \eqref{e:hormander} is verified. 
\end{example}

\begin{example}\label{exCarnot}
More generally, sub-Laplacians arise naturally in the setting of Carnot groups, which are a family of nilpotent Lie groups whose simplest non-Euclidean examples are real Heisenberg groups (see Example \ref{exHeis}). A Carnot group $G$ of step $r$ is a connected, simply connected, finite-dimensional Lie group whose Lie algebra $\mathfrak{g}$ admits a step-$r$ stratification, meaning that there exist nontrivial linear subspaces $V_1,\ldots,V_r$ such that
$$
\mathfrak{g}=V_1\oplus \ldots \oplus V_r, \qquad [V_1,V_i]=V_{i+1} \text{ for } i=1,\ldots,r-1 \text{ and } [V_1,V_r]=0.
$$
The stratum $V_i$ contains iterated brackets of length $i$ between elements of $V_1$.
Taking a basis of $V_1$ composed of left-invariant vector fields $X_1,\ldots,X_m$ (where $m$ is the dimension of $V_1$), and taking $\mu$ to be the Haar measure on $G$, we obtain a sub-Laplacian thanks to the formula \eqref{e:sLapla}. Quotienting by an appropriate lattice and using the left-invariance of $\Delta$, we can define sub-Laplacians on compact quotients of $G$ (see Example \ref{exHeis}).

To make this example more concrete, we now explain how $G$ can be identified to $(\R^N,\star)$ for $N=\sum_{i=1}^r \dim(V_i)$ and $\star$ a specific group law. For each $i\in\{1,\ldots,r\}$ we pick a basis of vector fields of $V_i$, which all together form a basis $X_1,\ldots,X_N$ of $\mathfrak{g}$. Then the exponential map $\exp$ is  a globally defined diffeomorphism from $\mathfrak{g}\cong \R^N$ to $G$ (since $G$ is connected and nilpotent):
$$
\exp:(x_1,\ldots,x_N)\mapsto \exp\left(\sum_{i=1}^N x_iX_i\right)\in G.
$$
This allows to identify $G$ with $\R^N$, and the group law $\star$ derives from the Baker-Campbell-Hausdorff formula. Following this procedure with $X_1=\partial_x$, $X_2=\partial_y-x\partial_z$ and $X_3=\partial_z$, the group which is obtained is isomorphic to the Heisenberg group $\mathbf{H}_1$ of Example \ref{exHeis}. Besides Heisenberg groups, other examples of Carnot groups include the Engel group, for which $r=3$.
\end{example}

Our results concern the nodal sets of eigenfunctions of $\Delta$ (i.e., the set where an eigenfunction $\varphi$ vanishes) and the nodal components (i.e., the connected components of $\{\varphi\neq 0\}$). They extend results which are well-known for eigenfunctions of Laplace-Beltrami operators.

\subsection{Main results: Courant's theorem and density of the nodal set}\label{s:courantandnodal}
Our results address the validity of Courant's nodal domain theorem  \cite{cou} for eigenfunctions of sub-Laplacians and the $1/\sqrt{\lambda}$-density of the nodal set with respect to an adapted distance, called the sub-Riemannian distance.

Let $\Omega\subset M$ be a connected open subset, assumed to be bounded and with Lipschitz boundary if $M=\R^N$. We denote by $C_c^\infty(\Omega)$ the set of smooth functions whose support is contained in $\Omega$. The operator $\Delta:C_c^\infty(\Omega)\rightarrow C_c^\infty(\Omega)$ is non-positive, symmetric and densely defined in $L^2(\Omega,\mu)$. In the sequel, we denote by $(\Delta_\Omega,\dom)$ its Friedrichs extension (see \cite{Riesz} and Section \ref{s:proofprop} for reminders). When $\partial\Omega\neq \emptyset$, this naturally enforces Dirichlet boundary conditions.

\begin{proposition} \label{p:eigpb}
The selfadjoint operator $(-\Delta_\Omega,\dom)$ has discrete point spectrum $$0\leq\lambda_1\leq \lambda_2\leq \ldots \leq \lambda_n \leq \ldots\rightarrow +\infty$$ (with repetitions according to multiplicities). There exists an orthonormal basis $\{\varphi_n\}_{n\in\N}$ of $L^2(\Omega,\mu)$ such that for every $n\in\N$, $\varphi_n\in\dom$ and $-\Delta_\Omega\varphi_n=\lambda_n\varphi_n$. 
\end{proposition}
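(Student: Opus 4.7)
The plan is to prove this via the standard strategy of showing the resolvent is compact and then invoking the spectral theorem for compact self-adjoint operators. The subelliptic estimate \eqref{e:subest} is what makes this work despite the loss of ellipticity.

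First, I would set up the Friedrichs extension carefully. The quadratic form $q(u,v) = \sum_{i=1}^m \langle X_i u, X_i v\rangle_{L^2(\Omega,\mu)}$ on $C_c^\infty(\Omega)$ is symmetric, non-negative, and closable; its closure $\widehat{q}_\Omega$ has form domain $\domqhat$, and the associated self-adjoint operator is $-\Delta_\Omega$ with $\dom \subset \domqhat$. This guarantees that $-\Delta_\Omega$ is self-adjoint and non-negative, so in particular $(-\Delta_\Omega + I)^{-1}$ exists as a bounded self-adjoint operator on $L^2(\Omega,\mu)$.

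Next, I would show that this resolvent is compact. For any $f \in L^2(\Omega,\mu)$, set $u = (-\Delta_\Omega + I)^{-1}f$, so that $u \in \dom$ and $-\Delta_\Omega u + u = f$. Taking $L^2$-norms gives $\|u\|_{L^2} \leq \|f\|_{L^2}$ and $\|\Delta_\Omega u\|_{L^2} \leq 2\|f\|_{L^2}$. The subelliptic estimate \eqref{e:subest}, applied after extending $u$ by zero outside $\Omega$ (legitimate since $u$ lies in the form domain with Dirichlet conditions; alternatively one applies the local version of \eqref{e:subest} combined with boundary regularity since $\partial\Omega$ is Lipschitz when $M=\R^N$), then yields
\begin{equation*}
\|u\|_{H^{2/r}(\Omega)} \leq C\|f\|_{L^2(\Omega)}.
\end{equation*}
Since $\Omega$ is bounded (or $M$ is compact), the Rellich--Kondrachov theorem gives a compact embedding $H^{2/r}(\Omega) \hookrightarrow L^2(\Omega,\mu)$, so $(-\Delta_\Omega + I)^{-1}$ factors as a bounded map into $H^{2/r}$ followed by a compact inclusion, hence is compact.

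The spectral theorem for compact self-adjoint operators then provides an orthonormal basis $\{\varphi_n\}$ of $L^2(\Omega,\mu)$ consisting of eigenfunctions of $(-\Delta_\Omega + I)^{-1}$, with eigenvalues $\mu_n \to 0$. Setting $\lambda_n = \mu_n^{-1} - 1$ gives eigenvalues of $-\Delta_\Omega$ that tend to $+\infty$; non-negativity of $-\Delta_\Omega$ forces $\lambda_n \geq 0$, and one reorders to get $0 \leq \lambda_1 \leq \lambda_2 \leq \cdots$.

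The main subtlety — more than an outright obstacle — is the treatment of the boundary when $M = \R^N$ and $\partial\Omega \neq \emptyset$. The footnote version of \eqref{e:subest} is stated for functions compactly supported in a fixed compact set, so one needs to argue that elements of $\dom$ (which encode Dirichlet conditions only in the form sense) enjoy enough regularity up to the boundary for the compactness argument. This is handled by the Lipschitz regularity assumption on $\partial\Omega$, which ensures the Rellich embedding holds and allows the form-domain version of the estimate to be used. Everything else is standard functional-analytic plumbing, so the proof is essentially a transcription of the Laplace--Beltrami argument with elliptic regularity replaced by the subelliptic estimate~\eqref{e:subest}.
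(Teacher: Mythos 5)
Your high-level strategy (compact resolvent via a subelliptic estimate and Rellich, then the spectral theorem) is the right one and is precisely what the paper does in the case $M=\R^N$. However, the crucial intermediate step is not justified correctly, and the paper in fact sidesteps exactly the issue you gloss over.

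The gap is in your application of \eqref{e:subest} to $u=(-\Delta_\Omega+I)^{-1}f\in\dom$. That estimate is stated for smooth (compactly supported) functions on $M$, and it involves $\|\Delta u\|_{L^2(M)}$. If you extend $u$ by zero to obtain $Eu$ on $M$, then in general $\Delta_M(Eu)\neq E(\Delta_\Omega u)$: there is a distributional contribution supported on $\partial\Omega$ (already for the ordinary Dirichlet Laplacian, $\Delta(Eu)=E(\Delta u)-\partial_n u\,\delta_{\partial\Omega}$, and the normal derivative does not vanish for eigenfunctions), so $\Delta_M(Eu)\notin L^2(M)$ and \eqref{e:subest} does not apply. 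Membership of $u$ in the form domain, or Lipschitz regularity of $\partial\Omega$, does not repair this; and Proposition \ref{p:eigpb} does not assume the non-characteristic boundary condition (Assumption \ref{a:noncarac}) that would give boundary regularity. The paper avoids the problem by reaching for the \emph{first-order} Rothschild--Stein estimate \eqref{e:globa}, which bounds $\|u\|_{H^s(M)}$ by $\sum_j\|X_ju\|_{L^2(M)}+\|u\|_{L^2(M)}$. That estimate, after squaring, only involves the quadratic form $\widehat q$, so it extends by form-density from $C_c^\infty(\Omega)$ to all of $\domqhat$ (and to $Eu$ via Lemma \ref{l:ext}); one then gets the resolvent estimate \eqref{e:resolvent} for $u\in\dom$ from $\widehat q_\Omega(u)=(-\Delta_\Omega u,u)\leq\|\Delta_\Omega u\|\,\|u\|$ by Cauchy--Schwarz. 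This detour through the first-order estimate is not optional: it is what lets the bound survive the passage to the Friedrichs domain.

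A second point of divergence: for $M$ compact the paper does not prove compactness of $(\text{Id}-\Delta_\Omega)^{-1}$ at all. It establishes compactness of $(\text{Id}-\Delta_M)^{-1}$ on the boundaryless manifold $M$, deduces $\mu_n(\Delta_M)\to\infty$, and then uses extension-by-zero (Lemma \ref{l:ext}) and the min-max characterization to get domain monotonicity $\mu_n(\Delta_\Omega)\geq\mu_n(\Delta_M)$, hence $\mu_n(\Delta_\Omega)\to\infty$. This covers arbitrary open $\Omega\subset M$ (no boundary regularity is imposed when $M$ is compact), which your compactness-of-the-resolvent route would not reach without additional boundary input. So you should either adopt the paper's first-order estimate to make the resolvent argument rigorous and observe that the compact embedding can then be taken on $H^s(M)$ (working with $Eu$, so no boundary regularity of $\Omega$ is needed), or split into cases as the paper does.
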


In our next results, we are going to rely on two assumptions.
\begin{assumption} \label{a:noncarac}
We assume that either $\Omega=M$, or the boundary of $\Omega$ is smooth and non-characteristic, meaning that for any $x\in \partial\Omega$, there exists $i\in\{1,\ldots,m\}$ such that $X_i(x)\notin T_x\partial\Omega$.
\end{assumption} 
Under Assumption \ref{a:noncarac}, the eigenfunctions of $-\Delta_\Omega$ are smooth up to the boundary $\partial \Omega$ (see \cite[Theorem III, point (4)]{kohnnirenberg}), which will be important in the proofs.
\begin{assumption} \label{a:unique}
We assume that one of the following holds:
\begin{enumerate}
\item The (topological) dimension of $M$ is $N=2$;
\item or the manifold $M$, the volume $\mu$ and the vector fields $X_1,\ldots,X_m$ are real-analytic.
\end{enumerate}
\end{assumption}
Under this assumption, it follows from \cite[Theorem 1]{wat} (in case 1)  and \cite{bony} (in case 2) that any $u$ satisfying $(\Delta_\Omega+\lambda) u=0$ in $\Omega$ for some $\lambda\in \R$ and vanishing in a non-empty open subset $U\subset \Omega$ vanishes in fact everywhere in $\Omega$.  Actually, to prove this unique continuation property in case 2, instead of working directly with the operator $\Delta_\Omega+\lambda$ which does not satisfy the assumptions of \cite{bony}, we work with $\partial_t^2+\Delta_\Omega$ instead.  We write that $v=ue^{i\lambda t}$ is a solution of the equation $(\partial_t^2+\Delta_\Omega)v=0$ in $\Omega\times \R$, which vanishes in the non-empty open subset $U\times \R$. Hence $v$ vanishes everywhere in $\Omega\times \R$ according to \cite[Theorem 1]{wat} (in case 1) and \cite[Corollary 4.1]{bony} (in case 2), and consequently $u$ vanishes everywhere in $\Omega$. This unique continuation property will be used in the proof of the second part of Theorem \ref{t:courant}. 

We denote by $$Z_f=\{f=0\}\subset \overline{\Omega}$$ the zero set, or commonly the \textit{nodal set} of a function $f$. A \textit{nodal domain} of $f$ is a connected component of $\Omega \setminus Z_f$.  We are in position to state an analogue of the Courant nodal domain theorem for sub-Laplacians:
\begin{theorem} \label{t:courant}
Under Assumption \ref{a:noncarac}, for any $n\in\N$, any eigenfunction of $-\Delta_\Omega$ with eigenvalue $\lambda_n$ has at most $n+\mult(\lambda_n)-1$ nodal domains, where $\mult(\lambda_n)$ denotes the multiplicity of $\lambda_n$. If moreover Assumption \ref{a:unique} is satisfied, a stronger bound holds: the number of nodal domains of any eigenfunction with eigenvalue $\lambda_n$ is bounded above by $n$.
\end{theorem}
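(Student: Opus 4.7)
The plan is to adapt the classical proof of Courant's nodal domain theorem, which rests on the min--max characterization of eigenvalues of the Friedrichs extension.

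\textbf{Preliminaries.} Under Assumption~\ref{a:noncarac}, the hypoellipticity of $\Delta$ together with the regularity up to the boundary from \cite{kohnnirenberg} guarantees that every eigenfunction $\varphi$ of $-\Delta_\Omega$ with eigenvalue $\lambda_n$ is smooth on $\overline{\Omega}$. Its nodal set $Z_\varphi$ is thus closed, and the nodal domains $D_1,\dots,D_k$ are well-defined nonempty open subsets of $\Omega$ on each of which $\varphi$ has constant sign. For each $j$, I set $\psi_j=\varphi\,\mathbf{1}_{D_j}$; these are linearly independent in $L^2(\Omega,\mu)$ thanks to their disjoint supports.

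\textbf{Key identity.} I next verify that each $\psi_j$ lies in the form domain $\domqhat$ of the Friedrichs extension and satisfies
\[
\sum_{i=1}^m \int_\Omega (X_i\psi_j)^2\,d\mu \;=\; \lambda_n\int_\Omega \psi_j^2\,d\mu.
\]
Membership is obtained by approximation: Stampacchia-type truncations $(\varphi-\e)_+$, signed according to $\varphi|_{D_j}$, belong to $\domqhat$ and are supported compactly in $D_j$, and they converge to $\psi_j$ in the form-domain norm as $\e\to 0$. The identity then follows from integration by parts in $D_j$, the absence of boundary contributions being ensured by $\varphi|_{\partial D_j}=0$.

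\textbf{Weak bound.} Suppose for contradiction $k\geq n+\mult(\lambda_n)$. A dimension count produces a nontrivial combination $\psi=\sum_{j=1}^k c_j\psi_j$ orthogonal in $L^2$ to $\varphi_1,\dots,\varphi_{n+\mult(\lambda_n)-1}$. By disjointness of supports the key identity yields $\sum_{i=1}^m\|X_i\psi\|_{L^2}^2=\lambda_n\|\psi\|_{L^2}^2$. But the family $\{\varphi_1,\dots,\varphi_{n+\mult(\lambda_n)-1}\}$ contains all eigenfunctions with eigenvalue $\leq\lambda_n$, so the min--max principle forces the Rayleigh quotient of $\psi$ to be at least $\lambda_{n+\mult(\lambda_n)}>\lambda_n$, a contradiction.

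\textbf{Strong bound under Assumption~\ref{a:unique}.} Now suppose $k\geq n+1$. Using only the first $n+1$ domains, I seek $\psi=\sum_{j=1}^{n+1}c_j\psi_j$ satisfying the $n$ linear conditions: orthogonality to $\varphi_1,\dots,\varphi_{n-1}$ together with $c_{n+1}=0$. A nonzero solution exists, and the resulting $\psi$ vanishes identically on the open set $D_{n+1}$. As before the Rayleigh quotient of $\psi$ equals $\lambda_n$, while min--max gives $R(\psi)\geq\lambda_n$; expanding $\psi$ in the eigenbasis forces $\psi$ to lie in the $\lambda_n$-eigenspace, whence $\psi$ itself is an eigenfunction of $-\Delta_\Omega$ with eigenvalue $\lambda_n$. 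The unique continuation property granted by Assumption~\ref{a:unique}, applied to $\psi$ vanishing on $D_{n+1}$, forces $\psi\equiv 0$ on $\Omega$, contradicting its nontriviality on $D_1\cup\dots\cup D_n$.

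\textbf{Main obstacle.} The only nonroutine point is the key identity: showing that the indicator truncation $\psi_j=\varphi\,\mathbf{1}_{D_j}$ lies in the sub-Riemannian form domain and produces no boundary terms upon integration by parts. In the Riemannian setting this is immediate from $H^1_0$-theory, but here it requires handling truncations in sub-Riemannian Sobolev spaces, relying on the continuity of $\varphi$ up to $\partial D_j$ and the Leibniz rule for the horizontal vector fields $X_i$.
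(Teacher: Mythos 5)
Your proposal follows the same overall architecture as the paper's proof: restrict the eigenfunction to each nodal domain, show the restrictions lie in the form domain with Rayleigh quotient exactly $\lambda_n$, then use linear algebra plus the min--max principle (and, for the strong bound, unique continuation) to reach a contradiction. The differences are in the details of the key technical step and in the bookkeeping for the weak bound.

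\textbf{Key technical step.} The paper proves (Lemma \ref{l:berard}) that the restriction $\underline{u}$ of $u$ to a nodal domain $D$ actually lies in the operator domain $\mathcal{D}(\Delta_D)$ and is the first Dirichlet eigenfunction there. To do so it uses Colette Ann\'e's cut-off functions $\chi_n$ built from the Riemannian distance to $\partial D$, and carefully bounds the three terms $I_1, I_2, I_3$ using the smoothness of $u$ up to $\partial\Omega$ (from Assumption \ref{a:noncarac}) and the vanishing of $u$ on $\partial D$. You instead use Stampacchia-type level-set truncations $(\varphi-\e)_+$, prove only the weaker fact that $\psi_j \in \domqhat$, and extract the Rayleigh identity directly. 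Both strategies work; yours is conceptually leaner and avoids Ann\'e's explicit geometric cut-offs, while the paper's approach yields extra information (membership in $\mathcal{D}(\Delta_D)$, and that $\lambda = \lambda_1(D)$, the latter via the paper's Lemma \ref{l:krein} which you do not need). To make your sketch watertight you would still need to record two points: (i) a Lipschitz, compactly supported function is in $\domqhat$ — this is the Friedrichs mollification lemma, using that the commutators $[X_i,\rho_\delta*]$ vanish in norm as $\delta\to 0$; and (ii) $\sup_\Omega|X_i\varphi|<\infty$ (from the Kohn--Nirenberg boundary regularity under Assumption \ref{a:noncarac}) to justify dominated convergence as $\e\to 0$. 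Both of these you implicitly use and the paper relies on the same ingredients, so there is no gap in the mathematics — just detail to fill in.

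\textbf{Weak bound.} Here the two proofs diverge a bit more. The paper produces $\mult(\lambda_n)+1$ linearly independent combinations $f_j$, each orthogonal to $\varphi_1,\dots,\varphi_{n-1}$ with Rayleigh quotient $\lambda_n$, and applies Point 2 of Lemma \ref{l:minmax} to conclude each $f_j\in E_{\lambda_n}$, contradicting the multiplicity. You produce a single $\psi$ orthogonal to $\varphi_1,\dots,\varphi_{n+\mult(\lambda_n)-1}$ and use the min--max lower bound $R(\psi)\geq\lambda_{n+\mult(\lambda_n)}>\lambda_n$ to contradict $R(\psi)=\lambda_n$. Both arguments are correct (and both ultimately rest on the observation that $\lambda_{n+\mult(\lambda_n)}>\lambda_n$). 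Your version uses a form of the min--max principle not literally stated in the paper's Lemma \ref{l:minmax} (the lower-bound inequality rather than the eigenspace characterization), but that inequality is a standard consequence of the spectral decomposition available from Proposition \ref{p:eigpb}, so this is a cosmetic rather than a substantive deviation. The strong bound under Assumption \ref{a:unique} is handled identically to the paper.
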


For our second result we introduce the sub-Riemannian metric: it is defined for $q\in M$ and $v\in T_qM$ as 
\begin{equation}\label{e:me}
g_q(v)=\inf\left\{\sum_{i=1}^mu_i^2 \ | \ v=\sum_{i=1}^mu_iX_i\right\}.
\end{equation}
This metric is finite if and only if $v\in\mathcal{D}={\rm Span}(X_1,\ldots,X_m)$. It induces a notion of distance $d:M\times M\rightarrow \R_+$, and the distance between two points is always finite thanks to the bracket-generating condition \eqref{e:hormander} (due to the Chow-Rashevsky theorem \cite[Theorem 2.4]{bellaiche1996tangent}). The sub-Riemannian balls are then defined as
\begin{equation} \label{e:ball}
B_\e(q)=\left\{q'\in M,\ d(q,q')<\e\right\}
\end{equation}
for $q\in M$ and $\e>0$.
\begin{theorem} \label{t:density}
Under Assumption \ref{a:noncarac}, there exists $C>0$ depending only on $\Omega$ such that for any $\lambda\in\R$ and any eigenfunction $\varphi_\lambda$ with eigenvalue $\lambda$, the nodal set $Z_{\varphi_\lambda}$ intersects any sub-Riemannian ball of radius greater than $C\lambda^{-1/2}$.
\end{theorem}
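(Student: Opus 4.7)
The plan is to argue by contradiction, combining domain monotonicity of the first Dirichlet eigenvalue with a Faber--Krahn--type lower bound on a small sub-Riemannian ball. Suppose $B_\e(q)\cap Z_{\varphi_\lambda}=\emptyset$. If $\Omega\neq M$ and $B_\e(q)$ meets $\partial\Omega$, the Dirichlet condition $\varphi_\lambda|_{\partial\Omega}=0$ already places a point of $B_\e(q)$ in $Z_{\varphi_\lambda}$, a contradiction; so one may assume $B_\e(q)\subset\Omega$. By hypoellipticity and Assumption \ref{a:noncarac}, $\varphi_\lambda$ is smooth, hence continuous, and since it is nowhere zero on $B_\e(q)$, it has constant sign there; thus $B_\e(q)$ is contained in some nodal domain $D$ of $\varphi_\lambda$. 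The restriction $\varphi_\lambda|_D$ has constant sign, vanishes on $\partial D$, and satisfies $-\Delta\varphi_\lambda=\lambda\varphi_\lambda$, so the usual argument (a sign-definite eigenfunction cannot be $L^2$-orthogonal to the strictly positive first Dirichlet eigenfunction of $-\Delta$ on $D$) forces $\lambda=\lambda_1^{\rm Dir}(D)$.

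Extending test functions on $B_\e(q)$ by zero to $D$, the variational characterization of $\lambda_1^{\rm Dir}$ yields the domain monotonicity $\lambda_1^{\rm Dir}(B_\e(q))\leq \lambda_1^{\rm Dir}(D)=\lambda$. The theorem thus reduces to a uniform Faber--Krahn estimate: there exist $c>0$ and $\e_0>0$ depending only on $\overline\Omega$ such that
\[
\lambda_1^{\rm Dir}(B_\e(q))\geq c\,\e^{-2} \qquad \text{for all } q\in\overline\Omega \text{ and } \e\in(0,\e_0].
\]
Combined with the previous inequality, this forces $\e\leq \sqrt{c^{-1}}\,\lambda^{-1/2}$, contradicting $\e>C\lambda^{-1/2}$ once $C=\sqrt{c^{-1}}$ (possibly enlarged so that $C\lambda^{-1/2}$ exceeds $\mathrm{diam}(\Omega)$ on the bounded range of small $\lambda$, where the statement is vacuous).

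Via the Rayleigh quotient, the displayed lower bound is equivalent to the Friedrichs-type sub-Riemannian Poincaré inequality
\[
\int_{B_\e(q)} u^2\,d\mu\;\leq\;\frac{\e^2}{c}\sum_{i=1}^m\int_{B_\e(q)} (X_iu)^2\,d\mu,\qquad u\in C_c^\infty(B_\e(q)).
\]
This is classical. Two natural routes are: (a) invoke Jerison's sub-Riemannian Poincaré inequality on balls and combine it with the doubling property to absorb the mean; or (b) in line with the paper's anisotropic philosophy, apply the sub-Riemannian dilation $\delta_\e^q$ centered at $q$, which sends $B_\e(q)$ onto the unit ball of the nilpotent approximation $\widehat G_q$ and transforms $-\Delta$ into $\e^{-2}\widehat\Delta_q+o(\e^{-2})$, where the Carnot sub-Laplacian $\widehat\Delta_q$ has a positive first Dirichlet eigenvalue on the unit ball of $\widehat G_q$.

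The main obstacle is the \emph{uniformity} in $q\in\overline\Omega$ of the constant $c$. In approach (a) one relies on the standard compact-set formulation of Jerison's inequality, in which the constant depends only on a compact neighborhood; in approach (b) one verifies continuous dependence of the nilpotent approximations $\widehat G_q$ on $q$ and invokes compactness of $\overline\Omega$. All remaining steps---handling balls crossing $\partial\Omega$ via the Dirichlet condition, extracting a nodal domain from a sign-determined region, identifying $\varphi_\lambda|_D$ with a first Dirichlet eigenfunction, and the Rayleigh-quotient conversion---are standard and mirror the classical Euclidean argument.
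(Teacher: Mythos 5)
Your overall skeleton---argue by contradiction by placing $B_\e(q)$ inside a nodal domain $D$ with $\lambda_1^{\rm Dir}(D)=\lambda$ and compare eigenvalues---is the right one, but the direction of the key estimate is reversed, and this leads you to the wrong tools. Extending a test function from $B_\e(q)$ by zero to $D$ enlarges the admissible class for $D$, so domain monotonicity gives $\lambda_1^{\rm Dir}(B_\e(q)) \geq \lambda_1^{\rm Dir}(D) = \lambda$, not $\leq$. Consequently what is actually needed is an \emph{upper} bound $\lambda_1^{\rm Dir}(B_\e(q)) \leq c\,\e^{-2}$, proved by plugging a suitably dilated bump function into the Rayleigh quotient and using the nilpotent approximation plus the ball-box theorem; this is Corollary \ref{c:upperbound}. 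The Faber--Krahn / Poincar\'e lower bound you propose instead runs in the opposite direction: Jerison's inequality controls the first \emph{Neumann} eigenvalue from below, which is a different statement, and the paper explicitly flags this distinction in the remark following Lemma \ref{l:uniformconstant}. Combining the two inequalities you actually wrote ($\lambda_1(B_\e)\leq\lambda$ and $\lambda_1(B_\e)\geq c\e^{-2}$) would give $\e\geq\sqrt{c}\,\lambda^{-1/2}$, which is the wrong direction and does not yield the theorem; your claimed conclusion $\e\leq\sqrt{c^{-1}}\lambda^{-1/2}$ does not follow from your premises.

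The second gap concerns uniformity of the constant in $q$. You assert that compactness of $\overline\Omega$ together with continuous dependence of the nilpotent approximation on $q$ gives a uniform constant. This fails near \emph{singular} points: there the growth vector jumps, the weights $w_i(q)$ and the privileged coordinate charts are discontinuous in $q$, and neither the ball-box constants nor the nilpotent approximation vary continuously. This is why the paper devotes Lemma \ref{l:uniformconstant} only to regular points and then introduces the desingularization of Lemma \ref{l:desingularisation}, lifting the vector fields to $\Omega\times\T^K$ so that the lifted point is regular, re-running the regular-point argument for the lifted sub-Laplacian, and projecting back via $B_\e(p)=\pi\bigl(B_\e^{\widetilde d}((p,0))\bigr)$. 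Without this step $c(q)$ may blow up as $q$ approaches the singular set (as in the Baouendi--Grushin example), and compactness alone does not repair the argument.
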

 Theorem \ref{t:density} is illustrated by a simple example in Section \ref{s:optimal} below. 

An important distinguishing factor of sub-Riemannian geometry from Riemannian geometry is that sub-Riemannian balls are \textit{anisotropic} whilst Riemannian balls are isotropic. The ball-box theorem captures this difference in more precise language (see \cite[Corollary 7.35]{bellaiche1996tangent}): it states that at any point $q\in\Omega$, there exist a system of privileged coordinates $\psi_q=(x_1,\ldots,x_N):U\rightarrow\R^N$ defined in a neighborhood $U$ of $q$, positive integers $w_1,\ldots,w_N$, and constants $C_q,\e_q>0$ such that for any $\e<\e_q$, the sub-Riemannian ball $B_\e(q)$ verifies
\begin{equation}\label{e:bboxthe}
\Box(C_q^{-1}\e)\subset (\psi_q)_*(B_\e(q))\subset \Box(C_q\e)
\end{equation}
where 
\begin{equation}\label{e:defbox}
\Box(\e)=[-\e^{w_1},\e^{w_1}]_{x_1}\times \ldots\times [-\e^{w_N},\e^{w_N}]_{x_N}\subset \R^N
\end{equation}
(note that $C_q$ does not depend on $\e$). Sub-Riemannian balls can thus be approximated by Euclidean rectangles in an appropriate coordinate system, with side-lengths scaling differently in each direction. Section \ref{s:optimal}'s example serves as a useful illustrator of the ball-box theorem.

As a consequence, roughly speaking, if $\Omega$ is some subset of $\R^N$, starting from a fixed point $q\in\Omega$ and following a given direction $\vec{\ell}$, one expects to cross the nodal set many more times in some directions $\vec{\ell}$ than some others, when the eigenvalue is large. This phenomenon is also explicitly expressed by Section \ref{s:optimal}'s example.

\subsection{Yau-type bounds for sub-Laplacians} 
In this section we explain that Yau's conjecture, unless properly modified, is not true for sub-Laplacians. Yau conjectured that for any smooth closed Riemannian manifold $(M,g)$ of dimension $N$,  there exist constants $c,C>0$ such that for any eigenfunction $\varphi_\lambda$ of the associated Laplace-Beltrami operator $-\Delta_g$ with eigenvalue $\lambda$,
\begin{equation}\label{e:yaubound}
c\sqrt{\lambda}\leq \mathscr{H}^{N-1}(Z_{\varphi_\lambda})\leq C\sqrt{\lambda}.
\end{equation}
where $\mathscr{H}^{N-1}$ is the $(N-1)$-dimensional Hausdorff measure. This conjecture has been proved in the real-analytic setting by Donnelly and Fefferman \cite{don}, but it is open in the case of general smooth manifolds.

In sub-Riemannian geometry, it is known, under the condition that the $w_i$ in \eqref{e:defbox} are independent of $q$ \footnote{this is equivalent to the condition that the sub-Riemannian flag is equiregular, see Section \ref{s:sRflag} for definitions.}, that the Hausdorff dimension with respect to the sub-Riemannian metric \eqref{e:me} of $M$ is equal to 
\begin{equation}\label{e:premierQ}
\mathcal{Q}=\sum_{i=1}^N w_i
\end{equation}
(see \cite{mitchell85}), which is strictly greater than $N$ as soon as the vector fields $X_1,\ldots,X_m$ do not span $TM$. For instance,  in the Heisenberg group $\mathbf{H}_1$ of Example \ref{exHeis}, there holds $N=3$, $w_1=1$, $w_2=1$, $w_3=2$, and hence  $\mathcal{Q}=4$. It is also known that hypersurfaces have Hausdorff dimension $\mathcal{Q}-1$ with respect to the sub-Riemannian metric \eqref{e:me} (see \cite[Section 0.6.C]{gromov}).

The following result shows that Yau-type bounds like \eqref{e:yaubound} (with $N$ replaced by $\mathcal{Q}$) do not hold for sub-Laplacians: in general we do not expect better bounds than 
\begin{equation*}
c\sqrt{\lambda}\leq \mathscr{H}^{\mathcal{Q}-1}(Z_{\varphi_\lambda})\leq C\lambda^{r/2}
\end{equation*}
where the step $r$ is defined in Section \ref{s:sRflag} below and is equal to $2$ in the Heisenberg case. Here and in the sequel, $\mathscr{H}^K$ denotes the Hausdorff measure of dimension $K$.
\begin{theorem}\label{t:yau}
Let $M=\Gamma\backslash \mathbf{H}_1$ as in Example \ref{exHeis}, endowed with the Lebesgue measure and the vector fields $X_1=\partial_{x_1}$ and $Y_1=\partial_{y_1}-x_1\partial_{z_1}$. Let $\mathscr{H}^3$ denote the Hausdorff measure of dimension $3$ with respect to the associated sub-Riemannian metric on $M$. There exist an open subset $\Omega\subset M$, constants $c_1,c_2>0$ and sequences of eigenfunctions $(\varphi_{1,m})_{m\in\N}$ and $(\varphi_{2,m})_{m\in\N}$ of $-\Delta_\Omega$ with respective eigenvalues $(\lambda_{1,m})_{m\in\N}$ and $(\lambda_{2,m})_{m\in\N}$ tending to $+\infty$, such that 
\begin{equation}\label{e:loweryau}
\mathscr{H}^{3}(Z_{\varphi_{1,m}})\leq c_1\sqrt{\lambda_{1,m}}
\end{equation}
and 
\begin{equation}\label{e:yauupper}
\mathscr{H}^{3}(Z_{\varphi_{2,m}})\geq c_2\lambda_{2,m}.
\end{equation}
\end{theorem}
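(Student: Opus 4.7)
The plan is to realize both sequences on a single, explicit open subset $\Omega$, chosen so that two simple separations of variables are available. Fix $a\in(0,\sqrt{\pi/2})$ and set
\[
\Omega=\{(x_1,y_1,z_1)\in M : |x_1|<a\};
\]
since $a<\sqrt{\pi/2}$ no identification coming from the lattice $\Gamma=(\sqrt{2\pi}\Z)^2\times 2\pi\Z$ affects the $x_1$-coordinate, so $\Omega\simeq(-a,a)_{x_1}\times\T_{y_1}\times\T_{z_1}$ equipped with $\Delta=\partial_{x_1}^2+(\partial_{y_1}-x_1\partial_{z_1})^2$. The boundary $\{x_1=\pm a\}$ is smooth and non-characteristic, because $X_1=\partial_{x_1}$ is transverse to it, so Assumption~\ref{a:noncarac} holds and Proposition~\ref{p:eigpb} supplies a discrete Dirichlet spectrum.

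For the upper bound \eqref{e:loweryau} I would take eigenfunctions independent of $z_1$:
\[
\varphi_{1,m}(x_1,y_1,z_1)=\sin\!\Bigl(\tfrac{\pi(x_1+a)}{2a}\Bigr)\cos(m\sqrt{2\pi}\,y_1),\qquad m\in\N^*.
\]
Because $\partial_{z_1}\varphi_{1,m}=0$ we have $Y_1\varphi_{1,m}=\partial_{y_1}\varphi_{1,m}$, hence $-\Delta\varphi_{1,m}=\lambda_{1,m}\varphi_{1,m}$ with $\lambda_{1,m}=(\pi/(2a))^2+2\pi m^2\sim m^2$. The sine has no interior zero, and $\cos(m\sqrt{2\pi}\,y_1)$ has exactly $2m$ zeros in $\T_{y_1}$, so the nodal set is a union of $2m$ parallel planes $\{y_1=y_j\}$, each non-characteristic. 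They are left-translates of $\{y_1=0\}$, so they share a common finite $\mathscr{H}^3$-measure in $\Omega$ (finite because on a non-characteristic hypersurface the ball-box theorem \eqref{e:bboxthe} makes $\mathscr{H}^3$ comparable to Euclidean area). Thus $\mathscr{H}^3(Z_{\varphi_{1,m}})\leq Cm\leq c_1\sqrt{\lambda_{1,m}}$.

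For the lower bound \eqref{e:yauupper} I would exploit the harmonic oscillator structure in $x_1$. For each $m\in\N^*$, let $f_0^{(m)}$ be the Dirichlet ground state of $-f''+m^2x_1^2f=Ef$ on $(-a,a)$, and set
\[
\varphi_{2,m}(x_1,y_1,z_1)=f_0^{(m)}(x_1)\sin(mz_1).
\]
Since $\varphi_{2,m}$ is independent of $y_1$, we have $Y_1\varphi_{2,m}=-x_1\partial_{z_1}\varphi_{2,m}$ and a direct computation reduces the equation $-\Delta\varphi_{2,m}=\lambda_{2,m}\varphi_{2,m}$ to the above oscillator equation, so $\lambda_{2,m}=E_0^{(m)}$. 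Comparison with the oscillator on $\R$ (ground state proportional to $e^{-mx_1^2/2}$, eigenvalue $m$) yields $E_0^{(m)}=m+O(e^{-ca^2m})$, hence $\lambda_{2,m}\sim m$. The ground state $f_0^{(m)}$ is sign-preserving, so the nodal set of $\varphi_{2,m}$ consists of the $2m$ horizontal planes $\{z_1=j\pi/m\}$; these are left-translates of $\{z_1=0\}$ and therefore share a common positive finite $\mathscr{H}^3$-measure in $\Omega$. Summing gives $\mathscr{H}^3(Z_{\varphi_{2,m}})\geq c' m\geq c_2\lambda_{2,m}$.

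The main technical step, which I expect to be the principal obstacle, is the uniform estimation of $\mathscr{H}^3(\{z_1=z_j\}\cap\Omega)$. Unlike $\{y_1=y_j\}$, the plane $\{z_1=z_j\}$ is characteristic along the line $\{x_1=0\}$: there $\mathcal{D}={\rm Span}(X_1,Y_1)$ is tangent to the surface. Using the ball-box theorem \eqref{e:bboxthe}, one checks that near this $1$-dimensional characteristic locus a sub-Riemannian ball of radius $r$ intersects the plane in a Euclidean region of area $\asymp r^2$, so an $\e$-tube about the characteristic curve contributes $O(\e)$ to $\mathscr{H}^3$ and disappears as $\e\to 0$; away from the characteristic line the surface is non-characteristic and $\mathscr{H}^3$ is comparable to the Euclidean surface area, giving a positive finite total. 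A parallel (and simpler) ball-box computation handles the non-characteristic planes $\{y_1=y_j\}$ of the upper-bound argument, and these two local verifications are the only genuinely sub-Riemannian inputs in the proof.
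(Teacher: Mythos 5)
Your proposal is essentially the paper's proof: same slab-in-$x_1$ domain $\Omega$, the same two separated-variable eigenfunction families (torus modes in $y_1$ for the upper bound, oscillator ground state in $x_1$ times $\sin(mz_1)$ for the lower bound), and the same key observation that Heisenberg left-translations are sub-Riemannian isometries carrying $\{y_1=c\}$ to $\{y_1=c+t\}$ and $\{z_1=c\}$ to $\{z_1=c+t\}$, so all nodal planes in each family carry the same $\mathscr{H}^3$-mass. The ball-box discussion you add to justify positivity and finiteness of that common mass is extra detail the paper leaves to the Gromov citation; note only that the $\e$-tube around the characteristic line contributes $O(\e^2)$ (not $O(\e)$) and that away from it the $\mathscr{H}^3$-density of $\{z_1=c\}$ is comparable to $|x_1|$ times Euclidean area rather than Euclidean area alone, though neither imprecision affects the conclusion.
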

The fact that the two bounds in our result are not of the same order is due to the fact that some sequences of eigenfunctions oscillate much more in the directions needing brackets to be generated (like $\partial_z$ in the Heisenberg group) than what is possible for Laplacians.  This demonstrates the need for a reformulation of Yau's Conjecture.

\subsection{Open questions}
\subsubsection{Pleijel bound}
The Courant bound for Laplace-Beltrami operators is known to be non-optimal as the eigenvalue tends to $+\infty$: the Pleijel bound asserts that when $n$ becomes large, the number of nodal domains of an eigenfunction with eigenvalue $\lambda_n$ is at most $cn$ for some explicit constant $c<1$ (see \cite{ple} for the case of 2-dimensional Euclidean domains, and \cite{ber}). An analogous result for sub-Laplacians would require new ideas since the appropriate Faber-Krahn inequality that one uses in the usual proof of the Pleijel bound is not known.

\subsubsection{Relation between elliptic and subelliptic bounds} In the analytic case, Yau's conjectured bound \eqref{e:yaubound} for the Laplace-Beltrami operator is known to hold (see \cite{don}). If we consider a family of Laplace-Beltrami operators of the form
\begin{equation}\label{e:blowup}
\Delta_{g_\varepsilon}=\Delta_{\rm sR}+\varepsilon^2 \Delta_{h}
\end{equation}
where $\Delta_{\rm sR}$ is a fixed analytic sub-Laplacian and $\Delta_h$ is a fixed analytic Laplace-Beltrami operator (defined on the same domain of $\R^N$), how do the constants $c$ and $C$ in \eqref{e:yaubound} behave as $\varepsilon\rightarrow 0$? This question is motivated by the fact that the ``sub-Riemannian limit'' of operators \eqref{e:blowup} has already attracted attention in other related contexts, see for instance \cite{rum00}.

\subsection{Organization of the paper and new contributions}
The proofs of our results follow the same broad strokes as those for elliptic Laplacians.  However, to the best of our knowledge, it is the novel application of tools and techniques unique to sub-Riemannian geometry and hypoelliptic equations within the subject of nodal geometry that is our main contribution.  We elaborate on this in our outline.

In Section \ref{s:proofprop}, we prove Proposition \ref{p:eigpb}: our proof mostly relies on the compactness of the resolvent of sub-Laplacians, which follows from well-known subelliptic estimates due to H\"ormander. In Section \ref{s:proofcourant} we prove Theorem \ref{t:courant} using the usual strategy for proving Courant-type bounds, and a combination of a robust argument by Colette Anné \cite{anne} with subelliptic estimates. In Section \ref{s:sRtools}, we gather tools coming from sub-Riemannian geometry, namely the nilpotent approximation and the desingularization procedure. In Section \ref{s:theorem2}, we prove Theorem \ref{t:density} using the aforementioned tools as basic building blocks for estimating the first eigenvalue of a sub-Laplacian in a small ball. Finally, in Section \ref{s:yau} we prove Theorem \ref{t:yau} by constructing a sub-Laplacian and explicit examples of eigenfunctions.

\paragraph{Acknowledgments.} We thank Hajer Bahouri, Yves Colin de Verdière, Valentina Franceschi, Bernard Helffer, Thomas Letendre, Eugenia Malinnikova, Iosif Polterovich and Luca Rizzi  for interesting discussions. The first author is supported by an NSERC Discovery Grant and the second author is supported by the Simons Foundation Grant 601948, DJ.

\section{Proof of Proposition \ref{p:eigpb}}\label{s:proofprop}
We first recall briefly the classical Friedrichs extension construction (\cite{Riesz}). We denote by $q_\Omega$ the quadratic form on $C_c^\infty(\Omega)$ given by $q_\Omega(v,w)=(\Delta v, w)$ where $(\cdot,\cdot)$ denotes the $L^2(\Omega,\mu)$ scalar product. It is closable and we denote by $\widehat{q}_\Omega$ its closure. Explicitly, denoting by $H$ the Hilbert space completion of $C_c^\infty(\Omega)$ with respect to the scalar product $(v,w)_H=(v,w)+q_\Omega(v,w)$, the inclusion map $\iota:C_c^\infty(\Omega)\xhookrightarrow{} L^2(\Omega,\mu)$ extends by continuity to a linear map $\widehat{\iota}:H\rightarrow L^2(\Omega,\mu)$. The quadratic form $q_\Omega$ also extends by continuity to a quadratic form $\widehat{q}_\Omega$ over $H$, so that if $\overline{v}$ and $\overline{w}$ denote the equivalence classes of $\{v_n\}$ and $\{w_n\}$ in $H$, then $\widehat{q}(\overline{v},\overline{w})=\lim_{n\rightarrow \infty}q_\Omega(v_n,w_n)$. One can check that $\widehat{\iota}$ is injective, hence $\widehat{q}_\Omega$ can be seen as a quadratic form on $L^2(\Omega,\mu)$, with domain $\domqhat=\widehat{\iota}(H)$. More concretely, the domain $\domqhat$ consists of those $v\in L^2(\Omega,\mu)$ such that there exists $\{v_n\}\subset C_c^\infty(\Omega)$ such that $v_n\rightarrow v$ in $L^2(\Omega,\mu)$ and $q_\Omega(v_n-v_\ell)\rightarrow 0$ as $\ell,n\rightarrow \infty$. 

%

Then, the Friedrichs extension of $(\Delta,C_c^\infty(\Omega))$ is the operator $(\Delta_\Omega,\dom)$ where
\begin{equation}\label{e:domDelta}
 \dom=\{v\in\domqhat : \widehat{q}_\Omega(v,\cdot) \text{ is } L^2(\Omega,\mu)-\text{continuous}\}
\end{equation}
 and $\Delta_\Omega v\in L^2(\Omega,\mu)$ is defined through the Riesz representation theorem by the relation $(\Delta_\Omega v, w)=\widehat{q}_\Omega(v,w)$ for any $w\in \domqhat$ (note that $\domqhat$ is dense in $L^2(\Omega,\mu)$). By the same procedure, for any bounded open set $U\subset M$ (including the cases of $U=M$ and $U=\Omega$) we obtain the Friedrichs extension $(\Delta_U,\domU)$ of the non-positive symmetric operator $\Delta:C_c^\infty(U)\rightarrow C_c^\infty(U)$ (densely defined on $L^2(U,\mu)$).

In the next lemma, for any $v\in \domqhat$, we denote by $Ev$ its extension by $0$ in $M\setminus \Omega$.
\begin{lemma}\label{l:ext}
Let $v\in \domqhat$. Then $Ev\in \domqMhat$ and $\widehat{q}_M(Ev)=\widehat{q}_\Omega(v)$.
\end{lemma}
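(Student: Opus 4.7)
The plan is to lift an approximating sequence for $v\in\domqhat$ up to $M$ by extension by zero, and exploit the fact that the vector fields $X_i$ commute with such an extension when applied to functions supported strictly inside $\Omega$.

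Concretely, by the characterization of $\domqhat$ recalled at the beginning of this section, I would first pick a sequence $\{v_n\}\subset C_c^\infty(\Omega)$ with $v_n\to v$ in $L^2(\Omega,\mu)$ and $q_\Omega(v_n-v_\ell)\to 0$ as $n,\ell\to\infty$. Since $\supp(v_n)$ is a compact subset of the open set $\Omega$ and hence bounded away from $\partial\Omega$, the extension $Ev_n$ belongs to $C_c^\infty(M)$, and $Ev_n\to Ev$ in $L^2(M,\mu)$. The key observation is then that for any $w\in C_c^\infty(\Omega)$ and any $i\in\{1,\ldots,m\}$, one has $X_i(Ew)=E(X_iw)$ pointwise on $M$, since both sides vanish on an open neighborhood of $M\setminus\Omega$. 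Taking $L^2$ norms yields $\|X_i(Ew)\|_{L^2(M,\mu)}=\|X_iw\|_{L^2(\Omega,\mu)}$, and therefore $q_M(Ew)=q_\Omega(w)$ for every $w\in C_c^\infty(\Omega)$, because both forms are, up to sign, equal to $\sum_i\|X_i\cdot\|^2$ in their respective $L^2$-spaces.

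Applying this identity to $w=v_n-v_\ell\in C_c^\infty(\Omega)$ gives $q_M(Ev_n-Ev_\ell)=q_\Omega(v_n-v_\ell)\to 0$. Combined with $Ev_n\to Ev$ in $L^2(M,\mu)$, this places $Ev$ in $\domqMhat$ by the analogous characterization of that form domain. Passing to the limit in the equality $q_M(Ev_n)=q_\Omega(v_n)$ via continuity of $\widehat{q}_M$ and $\widehat{q}_\Omega$ on their respective domains then yields $\widehat{q}_M(Ev)=\widehat{q}_\Omega(v)$. I do not anticipate any serious obstacle here: the whole content of the argument is the compatibility of each $X_i$ with extension by zero on $C_c^\infty(\Omega)$, which holds precisely because the approximating functions vanish in a neighborhood of $\partial\Omega$ and no boundary contribution can arise.
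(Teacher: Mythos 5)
Your proposal is correct and takes essentially the same approach as the paper: both pick the approximating sequence $\{v_n\}\subset C_c^\infty(\Omega)$, observe that extension by zero turns it into a sequence in $C_c^\infty(M)$ on which the quadratic form agrees (the paper writes $q_M(Ev_n-Ev_\ell)=(\Delta(v_n-v_\ell),v_n-v_\ell)=q_\Omega(v_n-v_\ell)$; you route the same identity through $X_i(Ew)=E(X_iw)$ and the first-order form of $q$), and then pass to the limit.
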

\begin{proof}
Let $v_n\in C_c^\infty(\Omega)$ such that $v_n\rightarrow v$ in $L^2(\Omega,\mu)$ and $q_\Omega(v_n-v_\ell)\rightarrow 0$ as $n,\ell\rightarrow \infty$. We have $Ev_n\rightarrow Ev$ in $L^2(M,\mu)$ and 
$$q_M(Ev_n-Ev_\ell)=(\Delta (Ev_n-Ev_\ell),Ev_n-Ev_\ell)=(\Delta (v_n-v_\ell),v_n-v_\ell)=q_\Omega(v_n-v_\ell) \underset{n,\ell\rightarrow \infty}{\longrightarrow} 0.$$
Thus $Ev\in \domqMhat$ and $\widehat{q}_M(Ev)=\widehat{q}_\Omega(v)$.
\end{proof}

We start the proof of Proposition \ref{p:eigpb} with the following particular case of the subelliptic estimate \cite[Estimate (17.20)]{rs76}: whenever $a$ and $b$ are in $C_c^\infty(M)$ with sufficiently small support and $b=1$ on the support of $a$, there exist $s>0$, $C>0$ such that
$$
\|au\|_{H^s(M)}\leq C\sum_{j=1}^m(\|bX_ju\|_{L^2(M)}+\|bu\|_{L^2(M)}).
$$
Using a partition of unity and the fact that commutators of $X_j$ with smooth cutoff functions are multiplication operators, we can globalize this inequality: there exist $s>0$, $C>0$ such that 
\begin{equation}\label{e:globa}
\|u\|_{H^s(M)}\leq C\sum_{j=1}^m (\|X_ju\|_{L^2(M)}+\|u\|_{L^2(M)}).
\end{equation}
Squaring this inequality, we obtain $\|u\|_{H^s(M)}^2\leq C(-\Delta_M u+u,u)_{L^2(M)}$, which together with the Cauchy-Schwarz inequality implies that 
\begin{equation}\label{e:resolvent}
\|u\|_{H^s(M)}\leq C\|(\text{Id}-\Delta_M)u\|_{L^2(M)}.
\end{equation}

\underline{Case 1: $M$ is compact.} In this case, the Rellich–Kondrachov theorem gives that the resolvent $(\text{Id}-\Delta_M)^{-1}$ is compact from $L^2(M)$ to $L^2(M)$. Using  \cite[Theorem XIII.64 p.245]{reed}, this implies that $\mu_n(\Delta_M)\rightarrow +\infty$ where
\begin{equation}\label{munDeltaM}
\mu_n(\Delta_M)=\inf_{\substack{W\subset \domqMhat\\ \dim(W)=n}} \max_{\substack{v\in W\\ \|v\|=1}} \widehat{q}_M(v).
\end{equation}
Note that in \cite[Theorem XIII.1 p. 76]{reed} the quantity $\mu_n(\Delta_M)$ is defined differently, but it is well-known that $\mu_n(\Delta_M)$ is in fact also equal to \eqref{munDeltaM} (see for instance \cite[Theorem 5.37]{lewin}).
This implies thanks to Lemma \ref{l:ext} that 
$$
\mu_n(\Delta_\Omega)=\inf_{\substack{W'\subset \domqhat\\ \dim(W')=n}} \max_{\substack{v\in W'\\ \|v\|=1}} \widehat{q}_\Omega(v)
$$ 
tends to $+\infty$ as $n\rightarrow +\infty$: indeed, by extending all elements of $W'\subset \domqhat$ by $0$ outside $\Omega$, we obtain an $n$ dimensional subspace $W\subset \domqMhat$, hence $\mu_n(\Delta_M)\leq \mu_n(\Delta_\Omega)$ for any $n\in\N$. Applying again \cite[Theorem XIII.64 p.245]{reed}, this time to $(\Delta_\Omega,\dom)$, we obtain the existence of a complete orthonormal basis $\{\varphi_n\}_{n=1}^\infty$ in $\dom$ so that $-\Delta_\Omega\varphi_n=\lambda_n\varphi_n$ with $0\leq \lambda_1\leq \ldots\leq \lambda_n\leq \ldots\rightarrow +\infty$.

\underline{Case 2: $M=\R^N$.} In this case, recall that we assumed in the introduction that $\Omega$ is bounded and has Lipschitz boundary. The inequality \eqref{e:resolvent} applied to $u\in C_c^\infty(\Omega)$, together with the density of $C_c^\infty(\Omega)$ in $L^2(\Omega)$, proves that $(\text{Id}-\Delta_\Omega)^{-1}:L^2(\Omega)\rightarrow H^s(\Omega)\cap \mathcal{D}(\Delta_\Omega)$ is continuous. Since the injection from $H^s(\Omega)$ to $L^2(\Omega)$ is compact (we use here the fact that the boundary of $\Omega$ is Lipschitz), we obtain directly that the resolvent is compact, and we conclude by \cite[Theorem XIII.64 p.245]{reed}.
 
\section{Proof of Theorem \ref{t:courant}} \label{s:proofcourant}

We start the proof of Theorem \ref{t:courant} with a lemma containing an integration-by-parts formula. We do not give its proof, which follows from the definition of the quadratic form $\widehat{q}_\Omega$ recalled above.
\begin{lemma}\label{l:ipp}
If $u\in \domqhat$, then $X_iu\in L^2(\Omega,\mu)$. Moreover if $u,v\in \domqhat$, then 
$$
\widehat{q}_\Omega(u,v)=-\sum_{i=1}^m \int_\Omega (X_iu)(X_iv)d\mu.
$$
\end{lemma}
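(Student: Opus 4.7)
The plan is a standard density argument bootstrapped from the smooth compactly supported case.

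First I would establish the formula on $C_c^\infty(\Omega)$. For $u,v\in C_c^\infty(\Omega)$, the adjoint relation $(X_i^*f,g)_{L^2}=(f,X_ig)_{L^2}$ holds without boundary contributions since $f,g$ have compact support in $\Omega$. Combined with $\Delta=-\sum_i X_i^*X_i$, this gives directly
$$q_\Omega(u,v)=(\Delta u,v)=-\sum_{i=1}^m(X_i^*X_iu,v)=-\sum_{i=1}^m(X_iu,X_iv)_{L^2}.$$
The crucial consequence, obtained by polarization or just by taking $u=v$, is $-q_\Omega(u,u)=\sum_i\|X_iu\|_{L^2}^2$, which converts control of $q_\Omega$ into control of each $X_iu$ in $L^2$.

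Next I would prove that $X_iu\in L^2(\Omega,\mu)$ for every $u\in\domqhat$. By the definition of $\domqhat$ recalled in Section~\ref{s:proofprop}, there exists a sequence $u_n\in C_c^\infty(\Omega)$ with $u_n\to u$ in $L^2(\Omega,\mu)$ and $q_\Omega(u_n-u_\ell)\to 0$ as $n,\ell\to\infty$. Applying the identity from the first step to the difference $u_n-u_\ell$ yields
$$\sum_{i=1}^m\|X_i(u_n-u_\ell)\|_{L^2}^2=-q_\Omega(u_n-u_\ell)\underset{n,\ell\to\infty}{\longrightarrow}0,$$
so $(X_iu_n)_n$ is Cauchy in $L^2(\Omega,\mu)$ with some limit $w_i$. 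To identify $w_i=X_iu$, I would observe that $u_n\to u$ in $L^2$ forces $u_n\to u$ in $\mathcal{D}'(\Omega)$, hence $X_iu_n\to X_iu$ distributionally by continuity of first-order differential operators on $\mathcal{D}'(\Omega)$; $L^2$-convergence also implies distributional convergence, so uniqueness of limits gives $X_iu=w_i\in L^2(\Omega,\mu)$.

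Finally, the bilinear formula follows by passage to the limit. Given $u,v\in\domqhat$ with approximating sequences $u_n,v_n$ as above, the definition of the closure gives $\widehat{q}_\Omega(u,v)=\lim_n q_\Omega(u_n,v_n)$, and the smooth-case identity together with $X_iu_n\to X_iu$ and $X_iv_n\to X_iv$ in $L^2(\Omega,\mu)$ yields $\lim_n q_\Omega(u_n,v_n)=-\sum_i(X_iu,X_iv)_{L^2}$, which is the claim. I do not foresee any serious obstacle: the only mildly delicate step is the distributional identification $w_i=X_iu$, which nonetheless requires only the continuity of $X_i$ on $\mathcal{D}'(\Omega)$.
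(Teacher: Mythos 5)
Your proof is correct, and it is exactly the standard density argument the paper alludes to when it writes that the lemma ``follows from the definition of the quadratic form $\widehat{q}_\Omega$'' without supplying details. The three steps (integration by parts on $C_c^\infty(\Omega)$, Cauchy-in-$L^2$ plus distributional identification of the limit $X_iu$, and passage to the limit in the bilinear form using $\widehat{q}_\Omega(u,v)=\lim_n q_\Omega(u_n,v_n)$) are the expected ones and are all justified.
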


We denote by $E_{\lambda_k}$ the eigenspace associated to the eigenvalue $\lambda_k$ of $-\Delta_\Omega$. We have the following min-max principle:
 \begin{lemma}  \label{l:minmax} 
 \begin{enumerate}
 \item $\varphi\in \domqhat\setminus\{0\}$ belongs to $E_{\lambda_1}$ if and only if it minimizes over $\domqhat\setminus\{0\}$ the Rayleigh quotient 
\begin{equation}\label{e:Rayleigh}
 R(\varphi)=\frac{\sum_{i=1}^m \|X_i\varphi\|^2_{L^2(\Omega,\mu)}}{\|\varphi\|_{L^2(\Omega,\mu)}^2}.
\end{equation}
 In this case $R(\varphi)=\lambda_1$.
 \item If $\varphi\in \domqhat\setminus \{0\}$ is orthogonal to $E_{\lambda_1},\ldots,E_{\lambda_{k-1}}$ and $R(\varphi)=\lambda_k$, then $\varphi\in E_{\lambda_k}$. 
 \end{enumerate}
\end{lemma}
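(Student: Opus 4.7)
The plan is to convert the Rayleigh quotient into a statement about the closed quadratic form $\widehat{q}_\Omega$, and then read off the minimization properties from the spectral expansion of any $\varphi\in\domqhat$ in the eigenbasis produced by Proposition \ref{p:eigpb}.

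The first step is to use Lemma \ref{l:ipp}: for any $\varphi\in\domqhat$ one has $\sum_{i=1}^m\|X_i\varphi\|_{L^2(\Omega,\mu)}^2=-\widehat{q}_\Omega(\varphi,\varphi)$, so that
\begin{equation*}
R(\varphi)=\frac{-\widehat{q}_\Omega(\varphi,\varphi)}{\|\varphi\|_{L^2(\Omega,\mu)}^2}.
\end{equation*}
The second step is to identify $\domqhat$ with the form domain of the positive self-adjoint operator $-\Delta_\Omega$. Writing any $\varphi\in L^2(\Omega,\mu)$ as $\varphi=\sum_{n\geq 1}a_n\varphi_n$ in the eigenbasis of Proposition \ref{p:eigpb}, I claim that
\begin{equation*}
\domqhat=\Bigl\{\varphi\in L^2(\Omega,\mu)\,:\,\sum_{n\geq 1}(1+\lambda_n)|a_n|^2<\infty\Bigr\},\qquad -\widehat{q}_\Omega(\varphi,\varphi)=\sum_{n\geq 1}\lambda_n|a_n|^2.
\end{equation*}
For $\varphi\in\dom$ this is immediate from $-\Delta_\Omega\varphi=\sum\lambda_n a_n\varphi_n$ and the definition $\widehat{q}_\Omega(v,w)=(\Delta_\Omega v,w)$. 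To extend the identity to all of $\domqhat$, I would approximate a general $\varphi\in\domqhat$ by the truncations $\varphi^{(N)}=\sum_{n\leq N}a_n\varphi_n\in\dom$ and take the limit using the closedness of $\widehat{q}_\Omega$ (i.e.\ completeness of $H$ in the graph norm) together with the monotone convergence of $\sum_{n\leq N}\lambda_n|a_n|^2$. This step is the only delicate one; it is essentially the standard fact that the Friedrichs form domain equals $\mathcal{D}((-\Delta_\Omega)^{1/2})$, and I expect it to be the main technical point because everything else is algebraic.

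With this spectral representation in hand, both parts are routine. For part (1), for any nonzero $\varphi\in\domqhat$,
\begin{equation*}
R(\varphi)=\frac{\sum_{n\geq 1}\lambda_n|a_n|^2}{\sum_{n\geq 1}|a_n|^2}\geq \lambda_1,
\end{equation*}
with equality if and only if $a_n=0$ whenever $\lambda_n>\lambda_1$, i.e.\ $\varphi\in E_{\lambda_1}$. Conversely, if $\varphi\in E_{\lambda_1}$ then $-\Delta_\Omega\varphi=\lambda_1\varphi$ and Lemma \ref{l:ipp} directly gives $R(\varphi)=\lambda_1$. For part (2), orthogonality of $\varphi$ to $E_{\lambda_1},\ldots,E_{\lambda_{k-1}}$ forces $a_n=0$ for every $n$ with $\lambda_n<\lambda_k$, so
\begin{equation*}
R(\varphi)=\frac{\sum_{n:\lambda_n\geq\lambda_k}\lambda_n|a_n|^2}{\sum_{n:\lambda_n\geq\lambda_k}|a_n|^2}\geq \lambda_k,
\end{equation*}
with equality if and only if $a_n=0$ whenever $\lambda_n>\lambda_k$. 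Under the hypothesis $R(\varphi)=\lambda_k$, this means $\varphi$ is a linear combination of the $\varphi_n$ with $\lambda_n=\lambda_k$, hence $\varphi\in E_{\lambda_k}$, which concludes the proof.
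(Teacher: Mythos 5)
Your proof is correct. The paper proves part (1) by a first-variation argument --- expanding $R(\varphi+\varepsilon\psi)$ in $\varepsilon$ and setting the linear term at $\varepsilon=0$ to zero, which forces $\widehat{q}_\Omega(\varphi,\cdot)$ to be $L^2$-continuous, hence $\varphi\in\dom$ with $-\Delta_\Omega\varphi=R(\varphi)\varphi$ --- and part (2) by the spectral expansion, whereas you derive both parts from the spectral expansion. Both routes are standard; since the paper's proof of part (2) already relies on the representation $-\widehat{q}_\Omega(\varphi)=\sum_n\lambda_n|a_n|^2$ for $\varphi\in\domqhat$, your unified approach carries no extra technical burden. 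You correctly flag the identification of $\domqhat$ with $\mathcal{D}((-\Delta_\Omega)^{1/2})$ as the delicate point. One small caution there: the truncation argument you describe yields only the inclusion $\{\varphi:\sum_n(1+\lambda_n)|a_n|^2<\infty\}\subset\domqhat$ (and the formula on that set); the reverse inclusion --- which is what you actually need in order to expand an arbitrary $\varphi\in\domqhat$ and compute its Rayleigh quotient --- requires a separate step, e.g.\ a Fatou argument applied to a $C_c^\infty(\Omega)$ sequence converging to $\varphi$ in the form norm. Since you invoke this as a cited standard theorem rather than rederiving it, this is acceptable. A minor trade-off worth noting: the first-variation argument the paper uses for part (1) is self-contained in the sense that it does not require the full form-domain identification, which may be why the paper prefers it there.
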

The proof is standard and follows for the first point from the computation of $R(\varphi+\varepsilon \psi)$ for $\psi\in \domqhat$ and $\e\rightarrow 0$, and for the second point from the decomposition of $\varphi$ in the orthonormal basis given by Proposition \ref{p:eigpb}.

The next two lemmas are classical in the Riemannian setting but their proofs require some care in the present sub-Riemannian (sR) context.
\begin{lemma}\label{l:krein}
Let $D\subset M$ be a connected open set with $\partial D\neq \emptyset$. Then $\lambda_1(D)>0$, and there exists an eigenfunction of $-\Delta_D$ with eigenvalue $\lambda_1(D)$ which is non-negative.
\end{lemma}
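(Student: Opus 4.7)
The plan is to establish both parts by variational arguments modeled on the Riemannian case, with H\"ormander's hypoellipticity and the bracket-generating condition \eqref{e:hormander} playing the role of classical elliptic regularity.

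For the positivity $\lambda_1(D)>0$, I would argue by contradiction. Suppose $\lambda_1(D)=0$ and let $\varphi\in E_{\lambda_1(D)}\setminus\{0\}$ be furnished by Proposition \ref{p:eigpb} applied to $D$. Lemma \ref{l:minmax} then gives $R(\varphi)=0$, and Lemma \ref{l:ipp} forces $X_i\varphi=0$ in $L^2(D)$ for each $i$. Hence $\Delta\varphi=0$ in the distributional sense on $D$, so H\"ormander's hypoellipticity ensures $\varphi\in C^\infty(D)$ and the identities $X_i\varphi=0$ hold pointwise. Iterating brackets and invoking \eqref{e:hormander} shows that every tangent vector at every point of $D$ annihilates $\varphi$, so $\varphi$ is a constant $c$ on the connected set $D$. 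Extending by zero and applying Lemma \ref{l:ext}, I get $E\varphi=c\mathbf{1}_D\in\domqMhat$ with $\widehat{q}_M(E\varphi)=0$; the analogue of Lemma \ref{l:ipp} on $M$ then yields $X_i(E\varphi)=0$ in $L^2(M)$, so $\Delta(E\varphi)=0$ as a distribution on $M$, and hypoellipticity again gives $E\varphi\in C^\infty(M)$. But $E\varphi$ equals $c$ on the non-empty open set $D$ and equals $0$ at every point of $\partial D\neq\emptyset$, and these boundary points are accumulation points of $D$; continuity thus forces $c=0$, contradicting $\varphi\neq 0$.

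For the existence of a non-negative eigenfunction, I would pick any $\varphi\in E_{\lambda_1(D)}\setminus\{0\}$ and establish the sub-Riemannian Kato-type identity
$$
|\varphi|\in\mathcal{D}(\widehat{q}_D), \qquad X_i|\varphi|=\mathrm{sgn}(\varphi)\,X_i\varphi \text{ a.e.}, \quad i=1,\ldots,m,
$$
with the convention $\mathrm{sgn}(0)=0$. The recipe is to approximate $\varphi$ by $\varphi_n\in C_c^\infty(D)$ in the form norm, apply the smooth chain rule to $F_\varepsilon\circ\varphi_n$ with $F_\varepsilon(t)=\sqrt{t^2+\varepsilon^2}-\varepsilon$, then pass to the limits $\varepsilon\to 0$ and $n\to\infty$ by dominated convergence, using a Stampacchia-type identity $X_i\varphi=0$ a.e.\ on $\{\varphi=0\}$. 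The identity gives $\|X_i|\varphi|\|_{L^2(D)}=\|X_i\varphi\|_{L^2(D)}$, hence $R(|\varphi|)=R(\varphi)=\lambda_1(D)$, and the first part of Lemma \ref{l:minmax} then ensures $|\varphi|\in E_{\lambda_1(D)}$. Hypoellipticity applied to $-\Delta_D|\varphi|=\lambda_1(D)|\varphi|$ finally shows that $|\varphi|$ is smooth on $D$.

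The main obstacle I expect is the Kato-type chain rule: while classical in Euclidean $H^1$, in the sub-Riemannian form domain it requires a careful approximation together with the Stampacchia identity to identify the weak horizontal gradient of $|\varphi|$. Once this is in hand, the rest is a direct transcription of the Riemannian proof, with H\"ormander's hypoellipticity supplying both the smoothness of minimizers and the contradiction in the first part.
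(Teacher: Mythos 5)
Your proof is correct and follows essentially the same route as the paper: for positivity, the reduction via the min-max principle and hypoellipticity to a constant on $D$, then extension by zero, hypoellipticity on $M$, and a contradiction at $\partial D$; for non-negativity, the Kato chain rule with $F_\e(t)=\sqrt{t^2+\e^2}-\e$ and dominated convergence (the paper cites Evans for exactly this step), followed by the min-max principle. The only difference is that you reach $\Delta(E\varphi)=0$ on $M$ directly from $X_i(E\varphi)=0$ in $L^2(M)$, whereas the paper first verifies $\widehat{q}_M(E\varphi,v)=0$ for all $v$ in the form domain to place $E\varphi$ in $\mathcal{D}(\Delta_M)$; both are valid, and yours is marginally leaner.
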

\begin{proof}
Assume for the sake of a contradiction that $\lambda_1(D)=0$ and let $u\not\equiv0$ be an eigenfunction $\Delta u=0$. Then $(\Delta u,u)=0$ hence by definition $\widehat{q}_D(u,u)=0$, which implies $\|X_iu\|_{L^2(D,\mu)}=0$ for any $i$ thanks to Lemma \ref{l:ipp}. But thanks to hypoelliptic regularity \cite{hor} we know that $u\in C^\infty(D)$ (a priori not up to the boundary if $D$ is arbitrary) hence $X_iu\equiv 0$ in $D$. Then $[X_{i_1},[X_{i_2},\ldots]\ldots]u  \equiv 0$ for any bracket of the vector fields, hence by the Hörmander bracket-generating condition $u$ is constant in $D$.

Let us prove that the only constant $u$ which belongs to $\mathcal{D}(\widehat{q}_D)$ is $0$. We choose $(u_\ell)_{\ell\in\N}$ such that $u_\ell \in C_c^\infty(D)$ and $u_\ell\rightarrow u$, $X_iu_\ell\rightarrow 0$ in $L^2(D)$ as $\ell\rightarrow +\infty$. We denote by $\underline{u}$ (resp. $\underline{u}_\ell$) the extension of $u$ (resp. $u_\ell$) to $M$ by $0$ in $M\setminus D$. First, $\underline{u}\in \mathcal{D}(\widehat{q}_M)$ according to Lemma \ref{l:ext}. Let $v\in \mathcal{D}(\widehat{q}_M)$, and $v_\ell\in C_c^\infty(M)$ such that $v_\ell\rightarrow v$ in $L^2(M,\mu)$ and $q_M(v_n-v_\ell)\rightarrow 0$ as $\ell,n\rightarrow +\infty$. Then
$$
\widehat{q}_M(\underline{u},v)=\lim_{\ell\rightarrow +\infty} \sum_{i=1}^m (X_i\underline{u}_\ell,X_iv_\ell)_{L^2(M,\mu)}=\lim_{\ell\rightarrow +\infty}\sum_{i=1}^m (X_iu_\ell,X_iv_\ell)_{L^2(\text{supp}(u_\ell),\mu)}=0
$$
since $(X_iv_\ell)_{\ell\in\N}$ is bounded in $L^2(M,\mu)$, and thus in $L^2(\text{supp}(u_\ell),\mu)$.
Hence $\underline{u}\in \mathcal{D}(\Delta_M)$ and $\Delta_M\underline{u}=0$. By hypoellipticity, $\underline{u}\in C^\infty(M)$, which is impossible since $\underline{u}$ is not smooth across $\partial D\neq \emptyset$.
%

Let $u_0\neq 0$ be in the first eigenspace of $\Delta_D$. Then $|u_0|\in \domqDhat$ and for any $i\in\{1,\ldots,m\}$,
\begin{equation}\label{e:valabsXi}
X_i |u_0|=\begin{cases}
      X_iu_0 &\text{a.e. in } \{u_0>0\} \\
      0 &\text{a.e. in } \{u_0=0\} \\
      -X_iu_0 &\text{a.e. in } \{u_0<0\}
    \end{cases}.
\end{equation}
Both statements follow from \cite[Chapter 5, Exercise 17]{evans}: the main steps are to apply the chain rule to $F_\e\circ u_0$ where $F_\e(z)=\sqrt{z^2+\e^2}-\e$ and then use the dominated convergence theorem.

From \eqref{e:valabsXi} we deduce that $R(|u_0|)=R(u_0)$ where the Rayleigh quotient $R$ is defined in \eqref{e:Rayleigh}. According to Lemma \ref{l:minmax}, this implies that $|u_0|$ is also in the first eigenspace of $\Delta_D$. 
\end{proof}

\begin{lemma}\label{l:berard}
Let $u\in \dom$ satisfying $-\Delta_\Omega u=\lambda u$ in $\Omega$. Let $D$ be a nodal domain of $u$. Then the restriction $\underline{u}$ of $u$ to $D$ belongs to $\mathcal{D}(\Delta_D)$, and it is an eigenfunction of the Dirichlet problem in $D$, associated to the smallest eigenvalue $\lambda=\lambda_1(D)$.
\end{lemma}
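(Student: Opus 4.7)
The plan is to transcribe the classical Riemannian argument to the subelliptic setting, paying particular attention to the form-domain step, which is the only nontrivial place. Replacing $u$ by $-u$ if necessary, I assume $u>0$ on $D$. By hypoellipticity $u\in C^\infty(\Omega)$, and under Assumption \ref{a:noncarac} (which is a standing hypothesis of Theorem \ref{t:courant}) $u$ extends smoothly to $\partial\Omega$ with $u=0$ there. Since $D$ is a connected component of the open set $\Omega\setminus Z_u$, any point of $\partial D\cap\Omega$ must belong to $Z_u$, so in total $u\equiv 0$ on $\partial D$. Writing $\underline u=u\mathbf{1}_D$, the three things to establish are (a) $\underline u\in\mathcal{D}(\widehat{q}_D)$, (b) $-\Delta_D\underline u=\lambda\underline u$, and (c) $\lambda=\lambda_1(D)$.

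For (a) I would first truncate. Applying the chain rule as in the proof of Lemma \ref{l:krein} (via $F_\delta(z)=\sqrt{z^2+\delta^2}-\delta$ and the identity $2(u-\varepsilon)_+=(u-\varepsilon)+|u-\varepsilon|$), one obtains $F_\varepsilon(u):=(u-\varepsilon)_+\in\mathcal{D}(\widehat{q}_\Omega)$ with $X_iF_\varepsilon(u)=(X_iu)\mathbf{1}_{\{u>\varepsilon\}}$ almost everywhere. Continuity of $u$ on $\overline D$ combined with $u\equiv 0$ on $\partial D$ forces $\overline{\{u>\varepsilon\}\cap D}$ to be a compact subset of $D$. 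A standard Friedrichs commutator/mollifier argument in local coordinates (legitimate because the $X_i$ have smooth coefficients and the support is strictly interior) then produces approximants in $C_c^\infty(D)$ converging to $F_\varepsilon(u)$ in the subelliptic graph norm, placing $F_\varepsilon(u)$ in $\mathcal{D}(\widehat{q}_D)$. Sending $\varepsilon\downarrow 0$, dominated convergence gives $F_\varepsilon(u)\to\underline u$ and $X_iF_\varepsilon(u)\to(X_iu)\mathbf{1}_D$ in $L^2(D)$, so $\underline u\in\mathcal{D}(\widehat{q}_D)$ with $X_i\underline u=(X_iu)\mathbf{1}_D$.

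For (b), take any $v\in\mathcal{D}(\widehat{q}_D)$ and extend it by zero to $\tilde v\in\mathcal{D}(\widehat{q}_\Omega)$ using Lemma \ref{l:ext}. Lemma \ref{l:ipp} (applied both over $D$ and over $\Omega$) then yields
\[
\widehat{q}_D(\underline u,v)=-\sum_{i=1}^m\int_D(X_i\underline u)(X_iv)\,d\mu=-\sum_{i=1}^m\int_\Omega(X_iu)(X_i\tilde v)\,d\mu=\widehat{q}_\Omega(u,\tilde v)=-\lambda(u,\tilde v)_{L^2(\Omega)}=-\lambda(\underline u,v)_{L^2(D)},
\]
so the Riesz-representation definition of $\Delta_D$ gives $\underline u\in\mathcal{D}(\Delta_D)$ and $-\Delta_D\underline u=\lambda\underline u$. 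For (c), min-max (Lemma \ref{l:minmax}) immediately yields $\lambda\geq\lambda_1(D)$. Conversely, Lemma \ref{l:krein} provides a non-negative first eigenfunction $\varphi_1\not\equiv 0$; since $\underline u>0$ on $D$ and $\varphi_1\geq 0$ is not identically zero, $(\underline u,\varphi_1)_{L^2(D)}>0$, ruling out orthogonality with the first eigenspace. Hence $\lambda$ cannot exceed $\lambda_1(D)$, and equality follows.

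The main obstacle is step (a): once $F_\varepsilon$ has pushed the support strictly inside $D$, the approximation by $C_c^\infty(D)$ functions reduces to a Friedrichs commutator argument for the smooth vector fields $X_i$ against Euclidean mollifiers in local coordinates, together with a partition of unity. This is standard but needs to be carried out cleanly; every other step is either a direct computation or a quotation of an already-proved lemma.
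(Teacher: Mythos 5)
Your proof is correct and follows the same overall three-step structure as the paper's (form-domain membership, Riesz-representation identity, $\lambda=\lambda_1(D)$ via a non-negative first eigenfunction). Steps (b) and (c) coincide with the paper's argument essentially verbatim. The genuine difference is in step (a), where the paper follows Anné and cuts off \emph{in space}: it builds smooth $\chi_n$ supported at distance $\gtrsim 1/n$ from $\partial D$, with $\|\nabla\chi_n\|\lesssim n$, $\|\nabla^2\chi_n\|\lesssim n^2$, and then uses the vanishing of $u$ to second order near $\partial D$ (so $u^2\lesssim n^{-2}$ on $\operatorname{supp}\nabla\chi_n$) to show directly that $q_D(\chi_nu-\chi_\ell u)\to0$. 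You instead cut off \emph{in the range}, taking $(u-\varepsilon)_+\mathbf 1_D$, which has compact support in $D$ and hence can be mollified; you then let $\varepsilon\downarrow0$. Both routes ultimately rest on the smoothness of $u$ up to $\partial\Omega$ (Assumption~\ref{a:noncarac} via Kohn--Nirenberg), but they spend it differently: the paper's route avoids any mollification/commutator step entirely at the cost of constructing the $\chi_n$ and estimating three integrals $I_1,I_2,I_3$; your route makes the first reduction essentially free (compact support inside $D$ comes immediately from continuity and the vanishing on $\partial D$) but then delegates the approximation by $C_c^\infty(D)$ to a Friedrichs commutator argument in local charts, which is standard for vector fields with smooth coefficients but is precisely the step the paper's construction is designed to sidestep. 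One cosmetic remark: you should consistently work with $(u-\varepsilon)_+\mathbf 1_D$ rather than the globally defined $(u-\varepsilon)_+$, since the latter may be nonzero on other nodal domains of $u$; your compactness argument already makes this restriction implicitly, so this is a notational rather than a substantive point.
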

\begin{proof}
Thanks to Assumption \ref{a:noncarac}, $u$ is smooth up to the boundary of $\Omega$ (see \cite[Theorem III, point (4)]{kohnnirenberg}), hence $\underline{u}$ is smooth up to the boundary of $D$. Without loss of generality, we assume that $\underline{u}$ is non-negative. We follow the proof of Lemma 2.0.1 in \cite{anne}, which does not use any regularity on the boundary of the nodal domain. 

We fix a Riemannian metric $g_R$ on $M$, which induces a distance ${\rm dist}_{g_R}$ and a gradient $\nabla_{g_R}$, in order to conveniently conduct our local analysis. Let $\chi_n\in C_c^\infty(\overline{D})$ be a cut-off function such that there exists a constant $C>0$ independent of $n$ such that
\begin{itemize}
\item $\chi_n(x)=1$ for ${\rm dist}_{g_R}(x,\partial D)\geq 1/n$
\item $\chi_n(x)=0$ for ${\rm dist}_{g_R}(x,\partial D)\leq 1/(2n)$
\item $\|\nabla_{g_R} \chi_n\|_{L^\infty}\leq C n$
\item $\|\nabla_{g_R}^2 \chi_n\|_{L^\infty}\leq C n^2$
\end{itemize}
 (the existence of $\chi_n$ is shown in \cite{anne}). Since $X_1,\ldots,X_m$ are smooth, this implies that $\|X_i\chi_n\|_{L^\infty}\leq Cn$ and $\|X_i^*\chi_n\|_{L^\infty}\leq C n$ for any $i$, and $\|\Delta \chi_n\|_{L^\infty}\leq Cn^2$ (the constant $C$ may have changed).

It suffices to show that $\underline{u}$ arises from a Cauchy sequence in $\mathcal{D}(\hat{q}_{\Omega})$ and that $\underline{u}$ generates a continuous linear functional on $L^2(D,\mu)$ via the quadratic form $\hat{q}_{D}$. We set $u_n=\chi_n \underline{u}$. We have $u_n\rightarrow \underline{u}$ in $L^2(D,\mu)$. Let us prove that $q_D(u_n-u_\ell)\rightarrow 0$ as $\ell,n\rightarrow\infty$. To simplify notations, we set $\alpha_{n,\ell}=\chi_n-\chi_\ell$. We have 
\begin{align}
|q_D(u_n-u_\ell)|&=\int_D \sum_{i=1}^m (X_i(u_n-u_\ell))^2 \; d\mu=\int_D \sum_{i=1}^m \alpha_{n,\ell} uX_i^*((X_i\alpha_{n,\ell})u+\alpha_{n,\ell} (X_iu)) d\mu\nonumber\\
&=-\int_D\alpha_{n,\ell} u^2\Delta \alpha_{n,\ell}\; d\mu-\int_D\alpha_{n,\ell}^2u\Delta u\;d\mu\nonumber\\ 
&\qquad \qquad-\int_D\left(\sum_{i=1}^m(X_i\alpha_{n,\ell})(X_i^*u)\alpha_{n,\ell} u +(X_i^*\alpha_{n,\ell})(X_iu)\alpha_{n,\ell} u\right)d\mu\label{e:convqD}\\
&=I_1+I_2+I_3.\nonumber
\end{align}
We show that $I_j\rightarrow 0$ for $j=1,2,3$, a $n,\ell\rightarrow +\infty$. We denote by $A_{n}$ the support of $\nabla\chi_n$, in particular ${\rm vol}(A_n)\rightarrow 0$. We have
\begin{align}
\left|\int_D \alpha_{n,\ell} u^2\Delta \alpha_{n,\ell}\; d\mu\right|\leq C\ell^2\int_{A_{\ell}} u^2 \;d\mu+Cn^2\int_{A_{n}} u^2 \;d\mu.
\end{align}
We use the fact that $u$ is smooth up to the boundary to get that $u^2\leq C\ell^{-2}$ in $A_\ell$ and similarly $u^2\leq Cn^{-2}$ in $A_n$. Hence
$$
|I_1|=\left|\int_{D}\alpha_{n,\ell} u^2\Delta \alpha_{n,\ell} \; d\mu\right|\leq C({\rm vol}(A_\ell)+{\rm vol}(A_n))\underset{n,\ell\rightarrow + \infty}{\longrightarrow} 0.
$$
Then, we have
$$
\left|\int_D\alpha_{n,\ell}^2u\Delta u\;d\mu\right|=\lambda\int_D \alpha_{n,\ell}^2u^2 \;d\mu\leq C({\rm vol}(A_\ell)+{\rm vol}(A_n))\int_D u^2 \;d\mu\underset{n,\ell\rightarrow + \infty}{\longrightarrow} 0.
$$
One can also check that 
$$
I_3=-\frac12\int_D u^2\left(\sum_{i=1}^m\alpha_{n,\ell} X_i^2\alpha_{n,\ell}+(X_i\alpha_{n,\ell})^2+\alpha_{n,\ell}(X_i^*)^2\alpha_{n,\ell}+(X_i^*\alpha_{n,\ell})^2+\alpha_{n,\ell}^2({\rm div}_\mu(X_i))^2\right)d\mu
$$
and once again
$$|I_3|\leq C\ell^2\int_{A_{\ell}} u^2 \;d\mu+Cn^2\int_{A_{n}} u^2 \;d\mu\leq C({\rm vol}(A_\ell)+{\rm vol}(A_n))\underset{n,\ell\rightarrow + \infty}{\longrightarrow} 0.$$
All in all, $|q_D(u_n-u_\ell)|\rightarrow0$ as $n,\ell\rightarrow +\infty$. Hence $\underline{u}\in\mathcal{D}(\widehat{q}_D)$. 

Next, we have to check that $|\widehat{q}_D(\underline{u},v)|\leq C(\underline{u},v)_{L^2(D,\mu)}$ for any $v\in\mathcal{D}(\widehat{q}_D)$. It is sufficient to check it for $v\in C_c^\infty(D)$ and then extend it by density to $\mathcal{D}(\widehat{q}_D)$. Let $v\in C_c^\infty(D)$. We have
\begin{align*}
\widehat{q}_D(\underline{u},v)&=-\lim_{n\rightarrow +\infty} \int_D \sum_{i=1}^m X_i(\chi_nu)X_iv\;d\mu=\lim_{n\rightarrow+\infty}(\Delta u_n,v)_{L^2(D,\mu)}\\
&=(\Delta \underline{u},v)_{L^2(D,\mu)}=-\lambda(\underline{u},v)_{L^2(D,\mu)}
\end{align*}
since $u_n=\underline{u}$ on ${\rm Supp}(v)$ for $n$ sufficiently large. Hence $\underline{u}\in \mathcal{D}(\Delta_D)$ and $\underline{u}$ is an eigenfunction with eigenvalue $\lambda$.

By Lemma \ref{l:krein} we know that $\lambda_1(D)>0$. Assume for the sake of a contradiction that $\lambda>\lambda_1(D)$. Let us denote by $u_0$ a non-negative function in the first eigenspace of $-\Delta_D$, which exists thanks to Lemma \ref{l:krein}. Then according to Proposition \ref{p:eigpb}, $u_0$ and $\underline{u}$ are orthogonal for the $L^2(D,\mu)$ scalar product. At the beginning of the proof, we assumed without loss of generality that $\underline{u}$ is non-negative, but since $D\subset \Omega\setminus Z_{\overline{u}}$, we even know that $\underline{u}$ is strictly positive in $D$. Hence $u_0=0$ a.e. in $D$, which is a contradiction with the fact that $\lambda_1(D)>0$. We conclude that $\lambda_1(D)=\lambda$.
\end{proof}

We start the proof of Theorem \ref{t:courant} by proving its second part. For this, we follow the arguments of \cite[Chapter VI (p. 453-454)]{cou} (see also \cite[Appendix D]{ber}).
Suppose that $u\in E_{\lambda_n}$ has at least $(n + 1)$ nodal domains $D_1,\ldots,D_{n+1}$. We also assume $\lambda_{n-1}<\lambda_n$. For $1\leq i\leq n$, we denote by $u_i$ the restriction of $u$ to $D_i$, which lies in the first eigenspace of the Dirichlet problem in $D_i$ according to Lemma \ref{l:berard}. In particular its Rayleigh quotient $R(u_i)$ is equal to $\lambda_n$ due to Point 1. of Lemma \ref{l:minmax}. We extend $u_i$ by $0$ in $\Omega\setminus D_i$, and we still denote by $u_i$ this extension, which belongs to $\mathcal{D}(\widehat{q}_\Omega)$ according to Lemma \ref{l:ext}.
We can determine $(a_1,\ldots,a_n)\in\R^n\setminus\{0\}$ such that $f=\sum_{i=1}^n a_iu_i$ is orthogonal in $L^2(\Omega,\mu)$ to the $(n-1)$ first eigenfunctions $\varphi_1,\ldots,\varphi_{n-1}$ of $-\Delta_\Omega$ on $\Omega$. We have $R(f)=\lambda_n$, hence $f$ is an eigenfunction for $\lambda_n$ according to the min-max principle (Point 2 of Lemma \ref{l:minmax}). But $f$ vanishes in the open set $D_{n+1}$ in contradiction with the unique continuation property of eigenfunctions which is satisfied thanks to Assumption \ref{a:unique}.

The first part of Theorem \ref{t:courant} follows from similar arguments, except that we avoid using the unique continuation property in the end.\footnote{We would like to thank Iosif Polterovich for sharing with us this argument, itself communicated to him by Dan Mangoubi.} Assume for the sake of a contradiction that $u\in E_{\lambda_n}$ has at least $m_n=n + \text{mult}(\lambda_n)$ nodal domains $D_1,\ldots,D_{m_n}$. By standard linear algebra, there exist $\text{mult}(\lambda_n)+1$ linearly independent functions $f_j$, $j=1,\ldots,\text{mult}(\lambda_n)+1$ of the form  $$f_j=\sum_{i=1}^{m_n} a_{j,i}u_i,$$ with $a_{j,i}\in\R$, $u_i$ is the extension by $0$ of the restriction of $u$ to $D_i$, and $f_j$ is orthogonal in $L^2(\Omega,\mu)$ to the $(n-1)$ first eigenfunctions $\varphi_1,\ldots,\varphi_{n-1}$ of $-\Delta_\Omega$ on $\Omega$. For any $j$ we have $R(f_j)=\lambda_n$, hence $f_j$ is an eigenfunction for $\lambda_n$ according to the min-max principle (Point 2 of Lemma \ref{l:minmax}). The $f_j$ are $\text{mult}(\lambda_n)+1$ linearly independent eigenfunctions with eigenvalue $\lambda_n$, which is impossible.

\begin{remark}
Counterexamples to the unique continuation property are known when Assumption \ref{a:unique} is not satisfied, for operators of the form $\Delta+V$ where $V$ is a smooth function on $\Omega$ (see \cite{bah}).
\end{remark}

\section{Sub-Riemannian tools} \label{s:sRtools}
This section introduces the notations, the terminology and the tools of sub-Riemannian (sR) geometry which will be needed in the proof of Theorem \ref{t:density}. For a more comprehensive introduction to sR geometry, we refer to \cite{agrachev2019} and \cite{bellaiche1996tangent}.

\subsection{Sub-Riemannian flag}  \label{s:sRflag}
First, we define the sR \textit{distribution}
$$
\mathcal{D}={\rm Span}(X_1,\ldots,X_m)
$$
and then the sR \textit{flag} as follows: $\mathcal{D}^0=\{ 0\}$, $\mathcal{D}^1=\mathcal{D}$, and, for any $j\geq 1$, $$\mathcal{D}^{j+1}=\mathcal{D}^j+[\mathcal{D},\mathcal{D}^j].$$ For any $q\in M$, this gives a flag
\begin{equation}\label{e:sRflag37}
\{0\}=\mathcal{D}_q^0\subset \mathcal{D}_q^1\subset \ldots \subset \mathcal{D}_q^{r-1} \varsubsetneq \mathcal{D}_q^{r(q)} =T_qM.
\end{equation}
where $\mathcal{D}^i_q$ denotes $\mathcal{D}^i$ taken at point $q$. The integer $r(q)$ is called the \textit{step}, or \textit{non-holonomic order}, of $\mathcal{D}$ at $q$. In the case of Example \ref{exGrushin} it is equal to $1$ except on the singular line $\{x=0\}$, where it is equal to $2$ (or $\alpha$). In the case of Example \ref{exCarnot} it coincides with the step $r$ defined there; in particular it is equal to $2$ in Example \ref{exHeis} at any point.

For $i\in\{0,\ldots,r(q)\}$, we set $$n_i(q)=\dim \mathcal{D}_q^i.$$ The sequence $(n_i(q))_{0\leq i\leq r(q)}$ is called the \textit{growth vector} at point $q$. We say that $q\in M$ is \textit{regular} if the growth vector $(n_i(q'))_{0\leq i\leq r(q')}$ at $q'$ is constant for $q'$ in a neighborhood of $q$. Otherwise, $q$ is said to be \textit{singular}. If any point $q\in M$ is regular, we say that the structure is \textit{equiregular}.

\begin{remark}
The Heisenberg sub-Laplacian of Example \ref{exHeis} is equiregular (with $n_1=2d$, $n_2=2d+1$), but the Baouendi-Grushin sub-Laplacian of Example \ref{exGrushin} is not equiregular.
\end{remark}

The number
\begin{equation} \label{e:homogdim}
\mathcal{Q}(q)=\sum_{i=1}^{r(q)}i(n_i(q)-n_{i-1}(q)),
\end{equation} 
coincides at any regular point $q$ with the Hausdorff dimension of the metric space induced by the sR distance on $M$ near $q$ (see \cite{mitchell85}).

We define a non-decreasing sequence of weights $w_i(q)$. Roughly speaking, $w_i(q)$ is the minimal length of the brackets of $X_1,\ldots,X_m$ needed to generate $i$ independent directions at $q$. Formally, given any $i\in \{1,\ldots,N\}$ and $q\in M$, there exists a unique $j\in\{1,\ldots,r(q)\}$ such that $n_{j-1}(q)+1\leq i\leq n_j(q)$. We set $w_i(q)=j$. It is not difficult to check that \eqref{e:homogdim} coincides with $\sum_{i=1}^{N}w_i(q)$ (see \eqref{e:premierQ}).

\subsection{Nilpotentization} \label{s:nilpotproced}

The aim of the following paragraphs is to introduce a system of local coordinates, called privileged coordinates, in which it is natural to write Taylor expansions of vector fields defined on the sR manifold (see \cite[Section 4]{bellaiche1996tangent}, \cite[Chapter 2]{jean2014control}). The first order term in the Taylor expansion of a vector field in privileged coordinates is called the nilpotent approximation of the vector field. A typical example of privileged coordinates system is given by some exponential coordinates with respect to a frame of $T_qM$ which is ``adapted'' to the sR flag \eqref{e:sRflag37} (see \eqref{e:privicoordexpo2} below).

\subsubsection{Non-holonomic orders.} 
The \textit{non-holonomic order} of a smooth germ of function is

\begin{equation*}
\ord_q(f)=\min\{ p\in\mathbb{N} : \exists i_1,\ldots,i_p\in \{1,\ldots,m\} \text{ such that } (X_{i_1}\ldots X_{i_p}f)(q)\neq 0\}
\end{equation*}
where we adopt the convention that $\min\emptyset = +\infty$.

 The \textit{non-holonomic order} of  a smooth germ of vector field $X$ at $q$, denoted by $\ord_q(X)$, is the real number
 
\begin{equation*}
\ord_q(X)=\sup\{\sigma\in\R : \ord_q(Xf)\geq \sigma+\ord_q(f), \ \ \forall f \in C^\infty(q)\}\in\Z.
\end{equation*} 
In other words, applying $X$ to a function $f$ ``increases'' the non-holonomic order by at least $\ord_q(X)$ (we put quotation marks to indicate that since $\ord_q(X)$ may be negative, applying $X$ may in fact decrease the non-holonomic order).

There holds $\ord_q(fg)=\ord_q(f)+\ord_q(g)$, $\ord_{q}(fX)= \ord_q(f)+\ord_q(X)$ and $\ord_q([X,Y])\geq \ord_q(X)+\ord_q(Y)$. As a consequence, every $X$ which has the property that $X(q')\in\mathcal{D}^i_{q'}$ for any $q'$ in a neighborhood of $q$ is of non-holonomic order $\geq -i$.

\begin{example}
Let us illustrate these definitions on Example \ref{exGrushin} where $X_1=\partial_x$ and $X_2=x\partial_y$. At $(x,y)=0$, the non-holonomic order of the function $f(x,y)=x$ is $1$ and the non-holonomic order of $f(x,y)=y$ is $2$ since $X_1X_2y=1$. The non-holonomic order of $\partial_x$ is $-1$, the non-holonomic order of $\partial_y$ is $-2$, and the non-holonomic order of $x\partial_y$ is $-1$.
\end{example}

\subsubsection{Privileged coordinates.}

A system of \textit{privileged coordinates} at $q$ is a system of local coordinates $(x_1,\ldots,x_N)$ near $q$ verifying
\begin{equation}\label{e:nonholpriv}
\ord_q(x_i)=w_i, \qquad \text{ for $1\leq i\leq N$}.
\end{equation}
 In particular, privileged coordinates satisfy $\partial_{x_i}\in \mathcal{D}_q^{w_i(q)}\backslash \mathcal{D}_q^{w_i(q)-1}$ at $q$, meaning that privileged coordinates are adapted to the flag (see definition below). One can also check that for any $(\alpha_1,\ldots,\alpha_N)\in\N^N, (\beta_1,\ldots,\beta_N)\in\N^N$, 
\begin{equation}\label{e:ordermonomials}
 \ord_q(x_1^{\alpha_1}\ldots x_N^{\alpha_N}\partial_{x_1}^{\beta_1}\ldots\partial_{x_N}^{\beta_N})=\sum_{i=1}^N (\alpha_i-\beta_i)w_i.
\end{equation}

We now describe a construction showing that privileged coordinates systems exist at any $q\in M$. A family $(Z_1,\ldots,Z_N)$ of vector fields is said to be \textit{adapted} to the sR flag at $q$ if it is a frame of $T_qM$ at $q$ and if $Z_i(q)\in \mathcal{D}_q^{w_i(q)}$ for any $i\in \{1,\ldots,N\}$. In other words, for any $i\in\{1,\ldots,r(q)\}$, the vectors $Z_1, \ldots, Z_{n_i(q)}$ at $q$ span $\mathcal{D}_q^i$.

If  $(Z_1,\ldots,Z_N)$ is an adapted frame at $q$, it is proved in \cite[Appendix B]{jean2014control} that the inverse of the local diffeomorphism
\begin{equation}\label{e:privicoordexpo2}
(x_1,\ldots,x_n)\mapsto \exp(x_1Z_1)\ldots \exp(x_N Z_N)(q)
\end{equation}
defines privileged coordinates at $q$ (called exponential coordinates of the second kind).

\subsubsection{Dilations.} 
As we mentioned in the introduction, sR geometries are anisotropic. The natural sR dilations that we now define are thus also anisotropic. 

Fix $q\in M$. For every $\varepsilon \in \mathbb{R}\setminus \{0\}$, the dilation $\delta_\e:\R^N\rightarrow \R^N$ is defined by

\begin{equation*}
\delta_\varepsilon(x)=(\varepsilon^{w_1(q)}x_1,\ldots,\varepsilon^{w_N(q)}x_N)
\end{equation*}
for every $x=(x_1,\ldots,x_N)$ - we omit the dependance in $q$ in the notation.

A dilation $\delta_\e$ acts also on functions and vector fields on $\R^N$ by pull-back: $\delta_\e^*f=f\circ \delta_\e$ and $\delta_\e^*X$ is the vector field such that $(\delta_\e^*X)(\delta_\e^*f)=\delta_\e^*(Xf)$ for any $f\in C^1(\R^N)$.

In particular, given a system of privileged coordinates $\psi_q:U\rightarrow\R^N$, for any vector field $X$ in $U$ of non-holonomic order $k$ there holds $\delta_\e^*(\psi_q)_*X=\e^{-k}(\psi_q)_*X$. We will use this property many times for vector fields of the form \eqref{e:ordermonomials}.

\subsubsection{Nilpotent approximation.}\label{s:nilllll} We now turn to the definition of the nilpotent approximation, which is a first-order approximation of vector fields in privileged coordinates near a point $q\in M$. An explicit example of computation of nilpotent approximation is given in \cite[Example 2.8]{jean2014control}, it may help to understand the definitions which follow.

Fix a system of privileged coordinates $\psi_q=(x_1,\ldots,x_N):U\rightarrow\R^N$ defined in a neighborhood $U$ of $q$. Coming back to the vector fields $X_1,\ldots,X_m$, we write the Taylor expansion

\begin{equation}\label{e:Taylorexpansionvf}
(\psi_q)_*X_i(x)\sim \sum_{\substack{\alpha\in \N^N\\ j\in \{1,\ldots,N\}}}a_{\alpha,j}x^\alpha \partial_{x_j}.
\end{equation}

Since $X_i\in\mathcal{D}$, its non-holonomic order is $-1$. Hence, when  $a_{\alpha,j}\neq 0$ for some $\alpha=(\alpha_1,\ldots,\alpha_N)\in \N^N$ and $j\in\{1,\ldots,N\}$,  
the monomial vector field $x^\alpha\partial_{x_j}$ has non-holonomic order $\geq -1$, which implies that $\sum_{i=1}^N w_i(q)\alpha_i\geq w_j(q)-1$ according to \eqref{e:ordermonomials}. Therefore, we may write $X_i$ as a formal series

\begin{equation}\label{e:formalseries}
(\psi_q)_*X_i=X_i^{(-1)}+X_i^{(0)}+X_i^{(1)}+\ldots
\end{equation}
where $X_i^{(k)}$ is a homogeneous vector field of degree $k$, meaning that

\begin{equation}\label{e:homodunilpo}
\delta_\varepsilon^*X^{(k)}_i=\varepsilon^{k}X^{(k)}_i.
\end{equation}
We set 
\begin{equation}\label{e:nilpoasfirst}
\widehat{X}^q_i=X_i^{(-1)}, \qquad 1\leq i\leq m
\end{equation}
which is a vector field on $\R^N$. Then $\widehat{X}^q_i$ is homogeneous of degree $-1$ with respect to dilations, meaning that $\delta_\varepsilon^* \widehat{X}^q_i=\varepsilon^{-1}\widehat{X}^q_i$ for $\varepsilon\neq 0$. For $\e>0$ small enough we have
\begin{equation} \label{e:withremainder}
X_i^\e:= \e\delta_{\e}^*(\psi_q)_*X_i=\widehat{X}_i^q+\e R_{i,\e}^q
\end{equation}
where $R_{i,\e}^q$ depends smoothly on $\e$ for the $C^\infty$ topology (see also \cite[Lemma 10.58]{agrachev2019}). 

Finally, the nilpotent approximation of $X_1,\ldots,X_m$ at $q$ is defined as $\widehat{M}^q\simeq\R^N$ endowed with the vector fields $\widehat{X}_1^q,\ldots,\widehat{X}^q_m$. This definition does not depend on the choice of privileged coordinates at $q$ because two sets of such coordinates produce two ``sR-isometric'' sR structures.   An important property is that $(\widehat{X}^q_1,\ldots,\widehat{X}^q_m)$ generates a nilpotent Lie algebra of step $r(q)$ (see \cite[Proposition 2.3]{jean2014control}).

 The nilpotent approximation of a measure $\mu$ on $M$ at $q\in M$ is the measure on $\R^N$
 \begin{equation}\label{e:nilpomeasure}
 \widehat{\mu}^q=\lim_{\e\rightarrow 0} \e^{-\mathcal{Q}(q)}\delta_\e^*(\psi_q)_*\mu
 \end{equation}
 where the convergence is understood in the vague topology. It follows from this definition that $\widehat{\mu}^q$ is proportional to the Lebesgue measure.
 
 \subsection{Desingularization}
The estimates we will need at some point in the proof of Theorem \ref{t:density} blow-up at singular points. However, when $q\in M$ is a singular point, it is possible to lift locally in a neighborhood $U$ of $q$ the vector fields $X_1,\ldots,X_m$ to vector fields $\widetilde{X}_1,\ldots,\widetilde{X}_m$ on  $\widetilde{U}=U\times \R^K$, so that the lift $\widetilde{q}=(q,0)$ of $q$ is a \textit{regular point} in $\widetilde{U}$ and many properties of the vector fields are preserved. This lifting procedure will allow us to recover uniform estimates in Section \ref{s:endofth2}.

\begin{lemma}\cite[Lemma 2.5 and Theorem 2.9]{jean2014control} \label{l:desingularisation}
Let $q$ be a point in $M$. Then there exist $K\in\N$, a neighborhood $U\subset M$ of $q$, coordinates $(x,y)$ on $\widetilde{U}=U\times \R^{K}$ and smooth vector fields 
$$
\widetilde{X}_i(x,y)=X_i(x)+\sum_{j=1}^K b_{ij}(x,y)\partial_{y_j}, \qquad i=1,\ldots,m,
$$
on $\widetilde{U}$ such that
\begin{itemize}
\item $\widetilde{X}_1,\ldots,\widetilde{X}_m$ satisfy Hörmander's bracket-generating condition in $\widetilde{U}$;
\item every $\widetilde{p}$ in $\widetilde{U}$ is regular;
\item denoting by $\pi:\widetilde{U}\rightarrow U$ the canonical projection, and by $\widetilde{d}$ the sR distance defined by $\widetilde{X}_1,\ldots,\widetilde{X}_m$ on $\widetilde{U}$, we have $\pi_*\widetilde{X}_i=X_i$, and for $p\in U$ and $\e$ small enough,
\begin{equation}\label{e:projball}
B_\e(p)=\pi\left(B_\e^{\widetilde{d}}((p,0))\right).
\end{equation}
\end{itemize}
\end{lemma}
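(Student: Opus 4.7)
The plan is to construct the coordinates $(y_1,\ldots,y_K)$ so that the iterated brackets of the lifted fields $\widetilde{X}_1,\ldots,\widetilde{X}_m$ realize, at $\widetilde{q}=(q,0)$, the structure of the free nilpotent Lie algebra $\mathcal{F}_{m,r}$ of step $r=r(q)$ on $m$ generators. This is the classical Rothschild--Stein lifting procedure, refined by Bellaïche. Set $\widetilde{N}=\dim \mathcal{F}_{m,r}$ and $K=\widetilde{N}-N$, and fix a Hall basis $(B_\alpha)_\alpha$ of $\mathcal{F}_{m,r}$ ordered by bracket length. The corresponding iterated brackets $B_\alpha(X_1,\ldots,X_m)(q)$ span $T_qM$ but satisfy exactly $K$ linear relations; the added $y$-coordinates are designed to index those relations and cancel them in the lift.

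The construction of the $b_{ij}(x,y)$ proceeds by induction on bracket length in a fixed system of privileged coordinates $x$ at $q$. At each level $\ell\leq r$, the brackets of length $\ell$ of the $\widetilde{X}_i$ already constructed may be linearly dependent at $\widetilde{q}$; for each such dependency we introduce a fresh $y_j$ and choose the corresponding component of the $b_{ij}$'s so that the new $\partial_{y_j}$ enters the bracket with nonzero coefficient at $\widetilde{q}$, killing that dependency while preserving those already resolved. The freeness of $\mathcal{F}_{m,r}$ up to step $r$ guarantees that these choices are mutually consistent and can be solved one at a time; this is the main obstacle and the combinatorial heart of the argument. By construction the growth vector of $\widetilde{X}_1,\ldots,\widetilde{X}_m$ at $\widetilde{q}$ equals that of $\mathcal{F}_{m,r}$, which is maximal. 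Lower semicontinuity of the growth vector then forces it to be locally constant near $\widetilde{q}$, so that every point of a sufficiently small $\widetilde{U}$ is regular, and Hörmander's condition on $\widetilde{U}$ is automatic as well.

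For the projection property, the identity $\pi_*\widetilde{X}_i=X_i$ implies that the $\pi$-image of any horizontal curve $\widetilde{\gamma}$ with controls $(u_1,\ldots,u_m)$ is horizontal downstairs with the same controls, hence with the same sR length, giving $\pi(B_\e^{\widetilde{d}}((p,0)))\subset B_\e(p)$. For the reverse inclusion, given a horizontal curve $\gamma:[0,1]\to U$ starting at $p$ with controls $u_i\in L^2([0,1])$ and length $<\e$, one lifts it by solving the Carathéodory ODE $\dot{y}(t)=\sum_{i=1}^m u_i(t)\,b_i(\gamma(t),y(t))$ with $y(0)=0$, where $b_i=(b_{i1},\ldots,b_{iK})$. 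For $\e$ small enough the solution exists on $[0,1]$ and remains in $\widetilde{U}=U\times\R^K$; the resulting curve $\widetilde{\gamma}=(\gamma,y)$ is horizontal for $\widetilde{X}_1,\ldots,\widetilde{X}_m$ with the same controls, and therefore the same length, so $\widetilde{\gamma}(1)\in B_\e^{\widetilde{d}}((p,0))$ and $\pi(\widetilde{\gamma}(1))=\gamma(1)$. Taking infima over $\gamma$ yields \eqref{e:projball}.
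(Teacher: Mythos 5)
The paper gives no proof of this lemma; it is quoted verbatim from Jean's monograph (Lemma 2.5 and Theorem 2.9), which in turn follows Bella\"iche's refinement of the Rothschild--Stein lifting. Your proposal reconstructs exactly that argument: lift to the free nilpotent Lie algebra $\mathcal{F}_{m,r}$, add one $y$-coordinate per missing bracket relation, and use lower semicontinuity of the growth vector together with maximality at $\widetilde{q}$ to deduce both the H\"ormander condition and equiregularity on a neighborhood. The argument for \eqref{e:projball} -- push-forward of horizontal curves for $\subset$, Carath\'eodory lift of the fiber ODE $\dot y=\sum u_i\,b_i(\gamma,y)$ for $\supset$, with a Gronwall bound to keep the lift defined for small $\e$ -- is correct and is the standard proof.

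The one place where your write-up is thinner than the reference is the inductive construction of the $b_{ij}$: ``choose the corresponding component so that the new $\partial_{y_j}$ enters the bracket with nonzero coefficient at $\widetilde{q}$, killing that dependency while preserving those already resolved.'' This is the content of Jean's Lemma 2.5, and making it precise requires working in privileged (weighted) coordinates and adding to $X_i$ only monomials $x^\alpha\partial_{y_j}$ of weighted degree chosen so that they contribute to brackets of length exactly $\ell$ at $\widetilde{q}$ and not to shorter ones; the graded structure, not just ``freeness,'' is what guarantees the consistency of the successive choices. You flag this as the combinatorial heart but do not actually carry it out. Since the paper itself defers entirely to \cite{jean2014control}, this level of detail is acceptable here, but in a self-contained account that step would need to be spelled out.

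One small precision worth adding: lower semicontinuity of $p\mapsto n_i(p)$ gives $n_i(p)\geq n_i(\widetilde{q})$ near $\widetilde{q}$, and the reason this forces equality is that $n_i(p)$ can never exceed $\dim\mathcal{F}_{m,r}^{(\leq i)}$ (the span $\mathcal{D}^i_p$ is the image of the degree-$\leq i$ part of the free Lie algebra under evaluation at $p$), which by construction equals $n_i(\widetilde{q})$. Stating this explicitly closes the ``maximality'' step.
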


\begin{example}
A possible (global) desingularization of the vector fields $X_1=\partial_x$ and $X_2=x\partial_y$ on $\R^2$ is given by the vector fields $\widetilde{X}_1=\partial_x$ and $\widetilde{X}_2=\partial_z+x\partial_y$ on $\R^3$.
\end{example}

\section{Proof of Theorem \ref{t:density}} \label{s:theorem2}

The proof of Theorem \ref{t:density} splits into two steps. The first one consists in proving an asymptotic upper bound for the first eigenvalue of the Dirichlet sub-Laplacian in a sR ball centered at $q\in M$ whose radius tends to $0$. This upper bound is uniform in $q$ when $q$ is a regular point. The second step is to use Lemma \ref{l:desingularisation} (i.e., a desingularization) to conclude.

\subsection{The first eigenvalue of the sub-Laplacian in a small sR ball}
We fix $q\in M$ and we take a chart $\psi_q:U\rightarrow\R^N$ of privileged coordinates at $q$, with $\psi_q(q)=0$. 
We denote by $R(q)$ the maximal radius such that $B_{R(q)}(q)\subset U$.

As seen in Section \ref{s:nilllll}, the nilpotent approximations of $X_1,\ldots,X_m$ (resp. of $\mu$) at $q$ are vector fields $\widehat{X}_i^q$ (resp. a measure $\widehat{\mu}^q$) in $\R^N$. 

For $\e\leq R(q)$, we set $$L^2_{\e}=L^2((\psi_q)_*B_\e(q),(\psi_q)_*\mu).$$
We also fix $u_1\in C_c^\infty(\Box(1/2))$ such that  $u_1(0)\neq 0$. Finally, we set 
\begin{equation}\label{e:ur(x)}
u_\e(x)=\e^{-\mathcal{Q}(q)/2}u_1(\delta_{1/\e}x)
\end{equation}
and we have $u_\e\in C_c^\infty(\Box(\e/2))$.
\begin{lemma} \label{l:eignilpo}
If $X$ is a vector field on $\psi_q(U)\subset\R^N$ which is homogeneous of degree $k\in \R$ (in the sense of \eqref{e:homodunilpo}), then there exist $c(q)>0, \varepsilon(q)>0$ such that for any $\e\leq \varepsilon(q)$ and any $1\leq i\leq m$,
\begin{equation}\label{e:Xiur}
\|Xu_\e\|_{L^2_{\e}}\leq c(q)\e^k\|u_\e\|_{L^2_{\e}}
\end{equation}
\end{lemma}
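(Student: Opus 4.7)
The plan is to perform a sub-Riemannian rescaling via $y=\delta_{1/\e}(x)$, which zooms out from $q$ at rate $\e$. Since $X$ is homogeneous of degree $k$ (that is, $\delta_\e^*X=\e^kX$), this rescaling extracts a factor $\e^k$, and the inequality reduces to a uniform bound against the rescaled measure $\e^{-\mathcal{Q}(q)}\delta_\e^*(\psi_q)_*\mu$, which by \eqref{e:nilpomeasure} converges vaguely to the nilpotent approximation $\widehat{\mu}^q$ as $\e\to 0$.

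Concretely, I first write $u_\e=\e^{-\mathcal{Q}(q)/2}\,u_1\circ\delta_{1/\e}$. Applying $\delta_{1/\e}^*$ to both sides of $\delta_\e^*X=\e^kX$ yields $\delta_{1/\e}^*X=\e^{-k}X$, so the defining identity $(\delta_{1/\e}^*X)(f\circ\delta_{1/\e})=(Xf)\circ\delta_{1/\e}$ gives the chain-rule formula $X(u_1\circ\delta_{1/\e})=\e^k(Xu_1)\circ\delta_{1/\e}$, whence $Xu_\e=\e^{k-\mathcal{Q}(q)/2}(Xu_1)\circ\delta_{1/\e}$. Changing variables $y=\delta_{1/\e}(x)$ in both $L^2$ norms then gives
\begin{align*}
\|u_\e\|_{L^2_\e}^2 &= \int |u_1|^2\,d\bigl(\e^{-\mathcal{Q}(q)}\delta_\e^*(\psi_q)_*\mu\bigr),\\
\|Xu_\e\|_{L^2_\e}^2 &= \e^{2k}\int |Xu_1|^2\,d\bigl(\e^{-\mathcal{Q}(q)}\delta_\e^*(\psi_q)_*\mu\bigr).
\end{align*}

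Since $u_1$ and $Xu_1$ are compactly supported and $\e^{-\mathcal{Q}(q)}\delta_\e^*(\psi_q)_*\mu\to\widehat{\mu}^q$ vaguely, both integrals converge as $\e\to 0$ to $\int|u_1|^2\,d\widehat{\mu}^q$ and $\int|Xu_1|^2\,d\widehat{\mu}^q$ respectively. Because $u_1(0)\neq 0$ and $\widehat{\mu}^q$ is proportional to Lebesgue measure, the first limit is strictly positive, so for $\e<\e(q)$ small enough the ratio of the two integrals is bounded above by a constant $c(q)^2$, and taking square roots yields \eqref{e:Xiur}.

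The one subtlety worth pointing out is ensuring that $u_\e\in L^2_\e$, that is, $\supp(u_\e)\subset(\psi_q)_*B_\e(q)$: by the ball-box theorem \eqref{e:bboxthe}, the inclusion $\Box(\e/2)\subset(\psi_q)_*B_\e(q)$ holds for small $\e$ provided the box containing $\supp(u_1)$ is taken, if necessary, smaller than $\Box(1/2)$ (with size depending on the constant $C_q$ in \eqref{e:bboxthe}). Apart from this minor local adjustment, no step is expected to be hard; the argument is essentially a direct exploitation of the defining convergence \eqref{e:nilpomeasure} of the nilpotent approximation combined with the homogeneity of $X$.
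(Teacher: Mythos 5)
Your argument is essentially the paper's: homogeneity of $X$ under $\delta_\e$, the defining vague convergence \eqref{e:nilpomeasure} of $\e^{-\mathcal{Q}(q)}\delta_\e^*(\psi_q)_*\mu$ to $\widehat{\mu}^q$, and the fact that $u_1(0)\neq0$ makes the limiting denominator strictly positive, so that the ratio of the two convergent quantities is bounded for small $\e$. The only technical divergence is how each handles the mismatch between $\supp(u_\e)=\Box(\e/2)$ and the domain $(\psi_q)_*B_\e(q)$ of integration: you propose shrinking the (globally fixed, per Section \ref{s:theorem2}) test function $u_1$ so that $\supp(u_1)\subset\Box(C_q^{-1})$, which makes the change of variables give the full integral $\int|u_1|^2\,d(\e^{-\mathcal{Q}(q)}\delta_\e^*(\psi_q)_*\mu)$, whereas the paper keeps $u_1$ fixed and instead sandwiches: it proves the comparison over boxes, $\|Xu_\e\|_{L^2(\Box(\e),\cdot)}\leq c(q)\e^k\|u_\e\|_{L^2(\Box(\alpha^2\e),\cdot)}$, and then uses $\Box(\alpha\e)\subset(\psi_q)_*B_\e(q)\subset\Box(\alpha^{-1}\e)$ twice to pass to $L^2_\e$. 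Your fix is cleaner to read but changes a datum fixed outside the lemma, which would mean $u_1$ depends on $q$ (or at least on a uniform bound for $C_q$); that is harmless here and still compatible with the uniformity needed later in Lemma \ref{l:uniformconstant} since $C_{q'}$ is uniform near a regular point, but the paper's box-sandwich avoids even raising the issue. Both routes are correct.
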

\begin{proof}

First, the ball-box theorem \eqref{e:bboxthe} (see also \cite{bellaiche1996tangent}, \cite[Corollary 2.1]{jean2014control}) yields the existence of $0<\alpha\leq 1$ and $\varepsilon(q)>0$ (both depending on $q$) such that for any $0<\varepsilon<\varepsilon(q)$,
\begin{equation}\label{e:bbox}
\Box(\alpha\e)\subset (\psi_q)_*B_{\e}(q)\subset \Box(\alpha^{-1}\e).
\end{equation}

We use \eqref{e:ur(x)} and the homogeneity in $\e$ of $X$, $\Box(\e)$ and $\widehat{\mu}^q$.
Due to \eqref{e:nilpomeasure} this implies the following two convergences
\begin{align*}
\e^{-\mathcal{Q}(q)/2}\e^{-k}\|Xu_\e\|_{L^2(\Box(\e),(\psi_q)_*\mu)}&=\e^{-\mathcal{Q}(q)/2}\|Xu_1\|_{L^2(\Box(1),\delta_\e^*(\psi_q)_*\mu)}\underset{\e\rightarrow 0}{\longrightarrow}  \|Xu_1\|_{L^2(\Box(1),\widehat{\mu}^q)}\\
\e^{-\mathcal{Q}(q)/2}\|u_\e\|_{L^2(\Box(\alpha^{2}\e),(\psi_q)_*\mu)}&=\e^{-\mathcal{Q}(q)/2}\|u_1\|_{L^2(\Box(\alpha^{2}),\delta_\e^*(\psi_q)_*\mu)}\underset{\e\rightarrow 0}{\longrightarrow}  \|u_1\|_{L^2(\Box(\alpha^{2}),\widehat{\mu}^q)}.
\end{align*}
Taking the ratio of the two convergences (justified by the fact that $u_1(0)\neq 0$ hence the last limit is $\neq 0$), we obtain
\begin{equation}\label{e:afterchangemeasure}
\|Xu_\e\|_{L^2(\Box(\e),(\psi_q)_*\mu)}\leq c(q)\e^k\|u_\e\|_{L^2(\Box(\alpha^2\e),(\psi_q)_*\mu)}.
\end{equation}
Using twice \eqref{e:bbox}, we obtain
$$
\|Xu_\e\|_{L^2_{\e}}\leq  \|Xu_\e\|_{L^2(\Box(\alpha^{-1} \e),(\psi_q)_*\mu)}\leq c(q)\e^k\|u_\e\|_{L^2(\Box(\alpha \e),(\psi_q)_*\mu)}\leq c(q)\e^k\|u_\e\|_{L^2_{\e}}
$$
which implies the lemma.
\end{proof}

\begin{corollary} \label{c:upperbound}
For any $q\in M$, there exist $c(q)>0$ and $\e(q)>0$ such that for any $\e\leq\e(q)$, there holds
$\lambda_1(B_\e(q))\leq c(q)\e^{-2}$. 
\end{corollary}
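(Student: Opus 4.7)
The plan is to apply the min--max characterization of the first Dirichlet eigenvalue (Point~1 of Lemma~\ref{l:minmax}, with $\Omega$ replaced by $B_\e(q)$) to a well-chosen trial function. I would set
\begin{equation*}
v_\e:=u_\e\circ\psi_q,
\end{equation*}
with $u_\e$ the rescaled cutoff from \eqref{e:ur(x)}. Its support is $\psi_q^{-1}(\Box(\e/2))$; the inclusion $\Box(\alpha\e)\subset(\psi_q)_*B_\e(q)$ from \eqref{e:bbox}, together if necessary with a $q$-dependent rescaling of $u_1$ (absorbed into the eventual constant $c(q)$), ensures $v_\e\in C_c^\infty(B_\e(q))$. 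Transferring the Rayleigh quotient through $\psi_q$ yields
\begin{equation*}
\lambda_1(B_\e(q))\leq R(v_\e)=\frac{\sum_{i=1}^m\|(\psi_q)_*X_i\,u_\e\|_{L^2_\e}^2}{\|u_\e\|_{L^2_\e}^2}.
\end{equation*}

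The core task is then to bound each $\|(\psi_q)_*X_i\,u_\e\|_{L^2_\e}$ by $c(q)\e^{-1}\|u_\e\|_{L^2_\e}$. Using the Taylor expansion \eqref{e:formalseries}, I would decompose
\begin{equation*}
(\psi_q)_*X_i=\widehat{X}^q_i+\sum_{k\geq 0}X^{(k)}_i,
\end{equation*}
and invoke Lemma~\ref{l:eignilpo} term by term: for the nilpotent leading part $\widehat{X}^q_i$ (homogeneous of degree $-1$) it yields $\|\widehat{X}^q_i\,u_\e\|_{L^2_\e}\leq c(q)\e^{-1}\|u_\e\|_{L^2_\e}$, and for each non-leading component $X^{(k)}_i$ (homogeneous of degree $k\geq 0$) it yields $\|X^{(k)}_i u_\e\|_{L^2_\e}\leq c(q)\e^k\|u_\e\|_{L^2_\e}\leq c(q)\|u_\e\|_{L^2_\e}$ for $\e\leq 1$. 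To handle the formal (a priori infinite) series rigorously, I would truncate at a fixed order and control the smooth remainder via \eqref{e:withremainder}: the identity $\e\delta_\e^*(\psi_q)_*X_i=\widehat{X}^q_i+\e R^q_{i,\e}$ with $R^q_{i,\e}$ smooth in $\e$, after undoing the dilation and pairing with $u_\e=\e^{-\mathcal{Q}(q)/2}\delta_{1/\e}^*u_1$, furnishes a bound on the tail that is uniform in $\e$.

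To finish, I would check that $\|u_\e\|_{L^2_\e}^2$ admits a strictly positive lower bound independent of $\e$: after the change of variables $y=\delta_{1/\e}x$ and using the vague convergence \eqref{e:nilpomeasure},
\begin{equation*}
\|u_\e\|_{L^2_\e}^2=\int|u_1(y)|^2\,d\bigl(\e^{-\mathcal{Q}(q)}\delta_\e^*(\psi_q)_*\mu\bigr)(y)\underset{\e\to 0}{\longrightarrow}\int|u_1|^2\,d\widehat{\mu}^q>0
\end{equation*}
since $u_1(0)\neq 0$. Combining the numerator bound with this lower bound delivers $\lambda_1(B_\e(q))\leq c(q)\e^{-2}$ for $\e\leq\e(q)$, as claimed. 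The main obstacle I foresee is the passage from Lemma~\ref{l:eignilpo}'s statement — which concerns a single homogeneous vector field — to the non-homogeneous push-forward $(\psi_q)_*X_i$: one must either truncate the formal Taylor expansion and estimate the smooth remainder through the $C^\infty$-dependence of $R^q_{i,\e}$ in \eqref{e:withremainder}, or emulate the scaling computation of Lemma~\ref{l:eignilpo} directly on $(\psi_q)_*X_i$ via that same relation.
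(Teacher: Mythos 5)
Your proposal follows the same skeleton as the paper's proof: test the min–max characterization against the rescaled bump $u_\e\circ\psi_q$, split $(\psi_q)_*X_i$ into the nilpotent part $\widehat{X}_i^q$ (order $-1$, giving the $\e^{-2}$) plus a lower-order remainder (order $\geq 0$, giving $O(1)$), and normalize by the lower bound on $\|u_\e\|_{L^2_\e}$ coming from $u_1(0)\neq 0$ and the measure convergence \eqref{e:nilpomeasure}. The one genuine difference is the mechanism for estimating the remainder $R_i^q=(\psi_q)_*X_i-\widehat{X}_i^q$. You are right that invoking Lemma~\ref{l:eignilpo} ``term by term'' on the formal series \eqref{e:formalseries} is not rigorous as written: the series is infinite, the summands $X_i^{(k)}$ for $k\geq 0$ are genuine vector fields only order by order, and one would still need to control $\sum_k c_k(q)$. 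The paper avoids this by rewriting $R_i^q=\sum_{j\in\mathcal{J}}b_{i,j}^qY_j$ with continuous coefficients $b_{i,j}^q$ and a \emph{finite} family of degree-$0$ monomial vector fields $Y_j$, then applying Lemma~\ref{l:eignilpo} once for each $Y_j$ with $k=0$; this yields the uniform bound \eqref{e:lowerorderterm}, which is exactly the estimate you also need. Your proposed fallback — undoing the dilation in \eqref{e:withremainder} so that $(\psi_q)_*X_i u_\e=\e^{-\mathcal{Q}(q)/2-1}\delta_{1/\e}^*\bigl((\widehat{X}_i^q+\e R_{i,\e}^q)u_1\bigr)$, and then using the smooth $\e$-dependence of $R_{i,\e}^q$ and the compact support of $u_1$ to bound the tail uniformly — is a valid alternative that sidesteps the infinite series entirely, and arguably is a bit cleaner than the weighted Hadamard-type factorization the paper uses. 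Either route, combined with the support check (your ``$q$-dependent rescaling'' is precisely the paper's use of $u_{\alpha\e}$ in place of $u_\e$, with $\alpha$ from the ball-box inclusion \eqref{e:bbox}), closes the proof.
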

\begin{proof}
We fix $q\in M$ and $1\leq i\leq m$.  According to \eqref{e:bbox}, we know that $u_{\alpha \e}$ is supported in $(\psi_q)_*B_\e(q)$.  
We set $R_i^q=(\psi_q)_*X_i-\widehat{X}_i^q$, which is a vector field on $(\psi_q)_*(U)\subset \R^N$. Let us assume for the moment that we have proved the existence of $C(q)$ such that for $\e$ sufficiently small,
\begin{equation}\label{e:lowerorderterm}
\|R_i^qu_{\alpha \e}\|_{L^2_\e}^2\leq C(q)\|u_{\alpha \e}\|_{L^2_\e}^2.
\end{equation} 
Then we can write
\begin{align}
\|(\psi_q)_*X_iu_{\alpha \e}\|_{L^2_{\e}}^2&\leq 2\left(\|\widehat{X}_i^qu_{\alpha \e}\|_{L^2_\e}^2+\|R_i^qu_{\alpha \e}\|_{L^2_\e}^2\right)\nonumber\\
&\leq (c(q)\e^{-2}+C(q))\|u_{\alpha \e}\|_{L^2_\e}^2\label{e:ami}\\
&\leq c'(q)\e^{-2}\|u_{\alpha \e}\|_{L^2_\e}^2\nonumber
\end{align}
where in the second inequality we used Lemma \ref{l:eignilpo} with $X=\widehat{X}_i^q$, and \eqref{e:lowerorderterm}. By the min-max principle (Lemma \ref{l:minmax}), we get the result.

There remains to prove \eqref{e:lowerorderterm}. We can write $R_{i}^q=\sum_{k=1}^N a_{i,k}^q \partial_{x_k}$ where the $a_{i,k}^q$ are smooth functions of $x\in \R^N$.  We then apply Taylor's theorem for multivariate functions with exact remainder to each $a_{i,k}^q$. Recalling \eqref{e:formalseries}, \eqref{e:nilpoasfirst} and the fact that $R_i^q=(\psi_q)_*X_i-\widehat{X}_i^q$ has homogeneous components of order $\geq 0$ only, this yields a decomposition (``factorizing out from $R_i^q$ monomial vector fields of order $0$'')
\begin{equation}\label{e:bijq}
R_{i}^q=\sum_{j\in\J}b_{i,j}^qY_j
\end{equation}
where $b_{i,j}^q$ is a continuous function of $x$. Here $\mathcal{J}$ is a finite set such that $(Y_j)_{j\in\J}$ consists of all monomial vector fields $Y$ which are homogeneous of degree $0$, i.e.  $Y$ is of the form $Y=x_i\partial_{x_k}$ for some indices $i,k\in\{1,\ldots,n\}$ satisfying $w_i=w_k$. 
Then we can write
$$
\|R_i^qu_{\alpha \e}\|_{L^2_\e}\leq  \sum_{j\in\J} \|b_{i,j}^q Y_ju_{\alpha \e}\|_{L^2_\e}\leq  \sum_{j\in\J} \|b_{i,j}^q\|_{L^\infty((\psi_q)_*B_\e(q))} \| Y_ju_{\alpha \e}\|_{L^2_\e}\leq C(q)\|u_{\alpha \e}\|_{L^2_\e}
$$
where the last inequality comes from Lemma \ref{l:eignilpo} applied with $k=0$ (and the fact that $b_{i,j}^q$ is smooth, hence bounded). This concludes the proof of \eqref{e:lowerorderterm}.
\end{proof}

\begin{remark}
One can in fact prove that $\e^2\lambda_1(B_\e(q))$ converges to the first eigenvalue of the Dirichlet sub-Laplacian $\widehat{\Delta}^q$ on $L^2(\widehat{B}_1(q),\widehat{\mu}^q)$, which is a stronger statement than Corollary \ref{c:upperbound}. Here, $\widehat{\Delta}^q=\sum_{i=1}^m (\widehat{X}_i^q)^2$ and $\widehat{B}_1(q)\subset\R^N$ denotes the sR ball computed with the metric obtained by replacing in \eqref{e:me} the vector fields $X_i$ by the nilpotentized ones $\widehat{X}_i^q$.

However, what we will need for our purpose is the uniformity of the convergence with respect to $q$, and it is easier to prove the uniformity of $c(q)$ with respect to $q$ (see Lemma \ref{l:uniformconstant}) than the uniformity of the convergence of $\e^2\lambda_1(B_\e(q))$ with respect to $q$. This is why we prefered to keep our weaker statements.
\end{remark}

\begin{lemma} \label{l:uniformconstant}
When $q$ is regular, the constants $c(q)$ and $\e(q)$ in Corollary \ref{c:upperbound} can be taken uniform in a small neighborhood of $q$.
\end{lemma}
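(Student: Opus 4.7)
The plan is to exploit the regularity of $q$ to construct a smoothly varying system of privileged coordinates in a neighborhood of $q$, and then conclude by continuity and a compactness argument. All the constants that appear in the proof of Corollary \ref{c:upperbound}, namely the ball-box constants $\alpha(q)$ and $\varepsilon(q)$ in \eqref{e:bbox}, the constant $c(q)$ in Lemma \ref{l:eignilpo}, and the remainder constant $C(q)$ in \eqref{e:lowerorderterm}, must be controlled uniformly. The point is that regularity of $q$ forces the whole nilpotentization procedure to depend continuously on the base point.

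\textbf{Step 1: smoothly varying privileged coordinates.} Since $q$ is regular, the growth vector $(n_i(q'))$ is constant for $q'$ in a neighborhood $V$ of $q$, and consequently the weights $w_i$ do not depend on $q'\in V$. Using the fact that each distribution $\mathcal{D}^i$ has locally constant rank on $V$, I would iteratively complete bases to obtain smooth vector fields $Z_1,\ldots,Z_N$ on $V$ such that $(Z_1(q'),\ldots,Z_N(q'))$ is adapted to the sR flag at every $q'\in V$. Setting
$$\psi_{q'}^{-1}(x_1,\ldots,x_N)=\exp(x_1Z_1)\ldots\exp(x_NZ_N)(q')$$
as in \eqref{e:privicoordexpo2} produces a family of privileged coordinates at $q'\in V$ depending smoothly on $q'$.

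\textbf{Step 2: uniform ball-box inclusion and uniform nilpotent approximation.} The standard proof of the ball-box theorem (see \cite[Section 4.2]{bellaiche1996tangent}) proceeds through quantities (norms of iterated brackets evaluated on the adapted frame) that depend continuously on $q'$. Thus, after shrinking to a compact neighborhood $V'\subset\subset V$ of $q$, one obtains uniform constants $\alpha\in(0,1]$ and $\varepsilon_0>0$ such that \eqref{e:bbox} holds at every $q'\in V'$ for $\e\leq\varepsilon_0$. Similarly, the Taylor expansion \eqref{e:Taylorexpansionvf} of $(\psi_{q'})_*X_i$ has coefficients which are smooth functions of $q'\in V'$; extracting the homogeneous-of-degree-$(-1)$ part in \eqref{e:nilpoasfirst} shows that $\widehat{X}_i^{q'}$ depends continuously on $q'\in V'$. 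In the same way, the density of the measure $\widehat{\mu}^{q'}$ with respect to Lebesgue depends continuously on $q'$.

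\textbf{Step 3: uniform convergence in Lemma \ref{l:eignilpo} and uniform remainder.} Running the argument of Lemma \ref{l:eignilpo} at an arbitrary $q'\in V'$, the two convergences used there are uniform in $q'\in V'$: since the density of $(\psi_{q'})_*\mu$ with respect to Lebesgue is smooth jointly in $(x,q')$, the rescaled densities $\varepsilon^{-\mathcal{Q}}\delta_\e^*(\psi_{q'})_*\mu$ converge to the density of $\widehat{\mu}^{q'}$ locally uniformly in $(x,q')$, and by positivity of $u_1(0)$ the denominator stays uniformly bounded away from zero. This yields a constant $c$ independent of $q'\in V'$ in \eqref{e:Xiur}, applied in turn to each nilpotent approximation $\widehat{X}_i^{q'}$ and to each of the finitely many monomial vector fields $Y_j$ of \eqref{e:bijq}. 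Finally, the coefficients $b_{i,j}^{q'}$ in \eqref{e:bijq} are continuous in $(x,q')\in (\psi_{q'}(U))\times V'$ and hence uniformly bounded on $\Box(\varepsilon_0)\times V'$, which provides a uniform constant $C$ in \eqref{e:lowerorderterm}. Substituting into \eqref{e:ami} completes the proof.

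\textbf{Main obstacle.} The delicate point is not any single estimate but ensuring that the entire nilpotentization procedure can be organized to depend continuously on the base point. This is exactly what equiregularity near $q$ allows: the dimensions of the flag being locally constant, a smooth adapted frame exists and drives the continuous $q'$-dependence of the privileged coordinates, of the nilpotent approximation, and of the limit measure $\widehat{\mu}^{q'}$. Once this is in place, uniformity follows from the continuity of a finite collection of smooth data on the compact set $V'$.
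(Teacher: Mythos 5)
Your proposal is correct and follows essentially the same route as the paper: build a smooth adapted frame from regularity, obtain smoothly varying privileged coordinates via exponential coordinates of the second kind, invoke uniformity of the ball-box constants and continuity of the nilpotent approximation and limit measure in $q'$, and conclude by compactness. The only cosmetic difference is that the paper cites \cite[Theorem 2.3]{jean2014control} and \cite[Section 4.1]{abb12} for the uniform ball-box constants and the smooth family of measures $\widehat{\mu}^{q'}$, whereas you outline the underlying continuity argument directly.
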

\begin{proof}
We use the fact that taking a nilpotent approximation is a ``uniform'' procedure near a regular point (but it is not uniform near a singular point). This fact is described in Section 2.2.2 in \cite{jean2014control}, and it mainly relies on the property that $q$ being regular, there exists a smooth frame $q'\mapsto (Z_1(q'),\ldots,Z_N(q'))\in (T_{q'}M)^N$ which is an adapted frame at every point $q'$ in some neighborhood $V$ of $q$. Using \eqref{e:privicoordexpo2}, this yields a smoothly varying system of privileged coordinates in $V$, and a smooth nilpotent approximation in $V$ (see Definition 2.9 in \cite{jean2014control}). As remarked in \cite[Section 4.4]{CdVHT} where a similar uniformity argument as ours is carried out, the continuity in Definition 2.9 of \cite{jean2014control} can be replaced by smoothness.

It follows from Theorem 2.3 in \cite{jean2014control} that the constants $\alpha$ and $\varepsilon(q')$ in \eqref{e:bbox} can be taken uniform over $q'\in V$. These uniform constants are respectively denoted by $\alpha\in(0,1]$ and $\varepsilon(V)>0$. We will deduce that the inequality  \eqref{e:afterchangemeasure} remains true in $V$ with a uniform constant $c(V)$. To state this property rigorously (see \eqref{e:unifff}), we notice that given any family of vector fields $V\ni q'\mapsto X^{q'}$ which is smooth in $q'\in V$, since $\{\widehat{\mu}^{q'}\}_{q'\in V}$ is a smooth family of measures (see \cite[Section 4.1]{abb12}), the constant 
$$c(q')=\frac{\|X^{q'}u_1\|_{L^2(\Box(1),\widehat{\mu}^{q'})}}{\|u_1\|_{L^2(\Box(\alpha^2),\widehat{\mu}^{q'})}}$$ in \eqref{e:afterchangemeasure}  is continuous over $V$, and its supremum over $V$ is denoted by $c(V)$. Moreover, the convergence \eqref{e:nilpomeasure} is also uniform over $V$, due to the smoothness of $\mu$. We assume that each vector field $X^{q'}$ is homogeneous of degree $k\in\R$. Following the proof of Lemma \ref{l:eignilpo}, we obtain that for any $q'\in V$ and any $0<\e<\e(V)$ there holds
\begin{equation}\label{e:unifff}
\|X^{q'}u_\e\|_{L^2(\Box(\e),(\psi_{q'})_*\mu)}\leq c(V)\e^{k}\|u_\e\|_{L^2(\Box(\alpha^{2} \e),(\psi_{q'})_*\mu)}.
\end{equation}
Applying \eqref{e:unifff} to $X^{q'}=\widehat{X}_i^{q'}$ we obtain that $\|\widehat{X}_i^{q'}u_{\alpha \e}\|_{L^2_\e}\leq c(V)\varepsilon^{-2}$ for any $q'\in V$.  

Since $q'\mapsto (\psi_{q'})_*X_i(q')$ and $q'\mapsto \widehat{X}_i^{q'}$ are smooth, the map $$V\ni q'\mapsto (\psi_{q'})_*X_i(q')-\widehat{X}_i^{q'}(q')=R_i^{q'}$$ is also smooth. Therefore the functions $b_{i,j}^{q'}$ defined in \eqref{e:bijq} depend smoothly on $q'$. Besides, the vector fields $x_i\partial_{x_k}$ for $i,k\in\{1,\ldots,N\}$, which appear in \eqref{e:bijq}, do not depend on $q'$. This implies that the constant $C(q')$ in \eqref{e:lowerorderterm} can be taken uniform over $V$. More precisely, this means that there exists $C(V)>0$ such that for any $\e<\e(V)$ and any $q'\in V$,
\begin{equation}\label{e:lowerorderterm2}
\|R_i^{q'}u_{\alpha \e}\|_{L^2((\psi_{q'})_*B_\e(q'),(\psi_{q'})_*\mu)}^2\leq C(V)\|u_{\alpha \e}\|_{L^2((\psi_{q'})_*B_\e(q'),(\psi_{q'})_*\mu)}^2.
\end{equation} 
This implies that the constants $c(q'), \e(q')$ in Corollary \ref{c:upperbound} can be taken uniform over $q'\in V$, which proves Lemma \ref{l:uniformconstant}.
\end{proof}

\begin{remark}
Corollary \ref{c:upperbound} establishes an upper bound for the first \textit{Dirichlet} eigenvalue as $r\rightarrow 0$. Note that lower bounds on the first \textit{Neumann} eigenvalue were established in \cite{jer2}, this is equivalent to Poincaré's inequality.
\end{remark}

\subsection{End of the proof of Theorem \ref{t:density}} \label{s:endofth2}

We assume that $\varphi_\lambda$ is an eigenfunction of $-\Delta_\Omega$ not belonging to the first eigenspace $E_{\lambda_1}$. We denote by $D_j$ its nodal domains. According to Lemma \ref{l:berard}, the restriction of $\varphi_\lambda$ to each domain $D_j$ is an eigenfunction of the Dirichlet sub-Laplacian $\Delta_{D_j}$, it belongs to its first eigenspace, and $\lambda_1(D_j)=\lambda$ for each $D_j$. Now, if $x\in \Omega$ and $d(q,Z_{\varphi_\lambda})>\e$, where $d$ is the sR distance, then $B_\e(q)\subset D_j$ for some $j$. By the min-max principle, it implies that $\lambda=\lambda_1(D_j)\leq\lambda_1(B_\e(q))$. But $\lambda_1(B_\e(q))\leq c(q)\e^{-2}$ thanks to Corollary \ref{c:upperbound}, hence $\e\leq c(q)\lambda^{-1/2}$.

If $q$ is regular, using Lemma \ref{l:uniformconstant}, we obtain that the constant $c(q')$ above is in fact uniform for $q'$ in a neighborhood of $q$. Hence any sR ball centered in a neighborhood $V$ of $q$ and of radius $\geq c'(V)\lambda^{-1/2}$ will intersect $Z_{\varphi_\lambda}$, which concludes the proof of the theorem ``locally near $q$'' in this case.

If $q\in \Omega$ is a singular point, the idea is to desingularize the vector fields at $q$ thanks to Lemma \ref{l:desingularisation} in order to recover a regular neighborhood but in a higher-dimensional space, and be able to apply the result we just obtained in the regular case. Following the notations and defined quantities of Lemma \ref{l:desingularisation}, we consider
$$
\widetilde{X}_i(x,y)=X_i(x)+\sum_{j=1}^K b_{ij}(x,y)\partial_{y_j}, \qquad i=1,\ldots,m,
$$
for $(x,y)\in U\times \R^K$.

We will build a sub-Laplacian $\widetilde{\Delta}$ satisfying the following key properties:
\begin{itemize}
\item It is defined on the bounded set $\Omega\times \mathbb{T}^K$ where $\mathbb{T}=\R/2\pi\Z$.
\item In a neighborhood of $(q,0)$ we have $\widetilde{\Delta}=-\sum_{i=1}^m\widetilde{X}_i^*\widetilde{X}_i$. Its domain $D(\widetilde{\Delta})$ is constructed as in Section \ref{s:proofprop}.
\item The vector fields defining $\widetilde{\Delta}$ satisfy Hörmander's bracket-generating condition everywhere in $\Omega\times \mathbb{T}^K$.
\end{itemize}
The construction of $\widetilde{\Delta}$ is achieved through cut-offs and extensions of the vector fields $\widetilde{X}_i$ to vector fields which are periodic in the $y_j$ variables and thus defined on $\Omega\times \mathbb{T}^K$. In the sequel, $\mathbb{T}$ is identified with $[-\pi, \pi)$ (with periodic boundary).

Without loss of generality we assume that $U$ is contained in the fundamental domain $\Omega\times [-\pi,\pi)^K$. We fix a compact set $V\subset U$ which is a neighborhood of $(q,0)$. We consider cut-off functions satisfying:
\begin{itemize}
\item $\chi_0:\Omega\times\T^K\rightarrow \R^+$ is a smooth function which is equal to $1$ in $V$ and $0$ in $\Omega\times\T^K\setminus U$. 
\item $\chi_1:\Omega\times\T^K\rightarrow \R^+$ is a smooth function which is equal to $0$ in $V$ and $>0$ outside $V$.
\end{itemize}
We consider the vector fields 
\begin{equation}\label{e:theunderlined}
\underline{\widetilde{X}}_i(x,y)=X_i+\sum_{j=1}^K\chi_0(x,y)b_{ij}(x,y)\partial_{y_j}
\end{equation}
for $i\in\{1,\ldots,m\}$ and  $Y_j=\chi_1(x,y)\partial_{y_j}$ for $j\in\{1,\ldots,K\}$ on $\Omega\times \mathbb{T}^K$, and the sub-Laplacian on $\Omega\times \mathbb{T}^K$ defined by 
$$
\widetilde{\Delta}=-\sum_{i=1}^m (\underline{\widetilde{X}}_i)^*\underline{\widetilde{X}}_i-\sum_{j=1}^KY_j^*Y_j.
$$

\begin{lemma}
The family of vector fields $\underline{\widetilde{X}}_1,\ldots,\underline{\widetilde{X}}_m,Y_1,\ldots,Y_K$ satisfies Hörmander's bracket-generating condition \eqref{e:hormander}, it is regular at $(q,0)$, and the non-characteristic boundary condition (Assumption \ref{a:noncarac}) is verified on $\partial(\Omega\times \mathbb{T}^K)$.
 \end{lemma}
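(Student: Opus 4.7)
The plan is to verify each of the three claims separately, exploiting the clean dichotomy produced by the cut-offs $\chi_0, \chi_1$: the function $\chi_1$ vanishes on $V$ and is strictly positive on $(\Omega \times \mathbb{T}^K)\setminus V$, while $\chi_0$ equals $1$ on $V$ and vanishes outside $U$. So the natural strategy is to split $\Omega \times \mathbb{T}^K$ into $V$ and its complement, and argue by a case distinction.

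For Hörmander's bracket-generating condition at a point $(x,y) \in V$: there the $Y_j$ are identically zero and $\underline{\widetilde{X}}_i = \widetilde{X}_i$, so the condition is exactly what is supplied by the first bullet of Lemma \ref{l:desingularisation}. At $(x,y) \notin V$: since $\chi_1(x,y)>0$, the vector fields $Y_1,\ldots,Y_K$ already span the vertical subspace $\{0\}\oplus T_y\mathbb{T}^K$. Because $\pi_*\underline{\widetilde{X}}_i = X_i$, iterated Lie brackets of the $\underline{\widetilde{X}}_i$ project onto the corresponding iterated brackets of $X_1,\ldots,X_m$, so every horizontal direction generated at $x$ by Hörmander's condition on $\{X_i\}$ lifts to an element of the Lie algebra generated by $\{\underline{\widetilde{X}}_i\}$ at $(x,y)$ with that projection. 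Combined with the $Y_j$ supplying the vertical directions, the full family spans $T_x\Omega \oplus T_y\mathbb{T}^K = T_{(x,y)}(\Omega \times \mathbb{T}^K)$.

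For the regularity at $(q,0)$: since $(q,0)$ lies in the interior of $V$, choose a neighborhood $W \subset V$ of $(q,0)$ on which $\chi_0 \equiv 1$ and $\chi_1 \equiv 0$. On $W$ the family $\{\underline{\widetilde{X}}_i, Y_j\}$ reduces to $\{\widetilde{X}_i\}$ (together with identically zero $Y_j$, which do not affect the flag), and by the second bullet of Lemma \ref{l:desingularisation} the growth vector of $\{\widetilde{X}_i\}$ is constant in a neighborhood of $(q,0)$; hence so is the growth vector of $\{\underline{\widetilde{X}}_i, Y_j\}$. For the non-characteristic boundary condition: since $\mathbb{T}^K$ has no boundary, $\partial(\Omega \times \mathbb{T}^K) = \partial\Omega \times \mathbb{T}^K$ and the tangent space at a boundary point $(x,y)$ decomposes as $T_x\partial\Omega \oplus T_y\mathbb{T}^K$. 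Applying Assumption \ref{a:noncarac} to $X_1,\ldots,X_m$ gives an index $i$ with $X_i(x) \notin T_x\partial\Omega$; since the horizontal component of $\underline{\widetilde{X}}_i(x,y)$ is exactly $X_i(x)$, the vector $\underline{\widetilde{X}}_i(x,y)$ cannot lie in $T_x\partial\Omega \oplus T_y\mathbb{T}^K$, which gives the non-characteristic property.

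The only mildly delicate point is the Hörmander verification outside $V$: one must notice that the vertical drift $\chi_0\sum_j b_{ij}\partial_{y_j}$ present in $\underline{\widetilde{X}}_i$ is harmless, since the $\underline{\widetilde{X}}_i$ are only used to recover the horizontal directions (via the projection $\pi$), while all vertical directions are supplied directly by the $Y_j$. Once this decoupling is carried out, the proof reduces to invoking Lemma \ref{l:desingularisation} and Assumption \ref{a:noncarac} in the appropriate regions.
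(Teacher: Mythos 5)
Your proof is correct and follows essentially the same approach as the paper's: a case split between $V$ (where $\underline{\widetilde{X}}_i=\widetilde{X}_i$, $Y_j=0$, and Lemma \ref{l:desingularisation} applies directly) and its complement (where $\chi_1>0$ makes the $Y_j$ span the verticals and iterated brackets of $\underline{\widetilde{X}}_i$ project onto those of $X_i$), together with the same decomposition $T\partial\Omega\oplus T\T^K$ for the non-characteristic condition. The only cosmetic difference is that the paper phrases the boundary step as ``$\underline{\widetilde{X}}_i=X_i$ on $\partial(\Omega\times\T^K)$'' (using that $\chi_0$ vanishes there), whereas you argue via the horizontal projection, which is slightly more robust but morally identical.
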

 \begin{proof}
In $V$ there holds $\underline{\widetilde{X}}_i=\widetilde{X}_i$. By Lemma \ref{l:desingularisation}, this implies that the Hörmander bracket-generating condition is satisfied in $V$. We notice that for any $i_1,\ldots,i_\ell\in \{1,\ldots,m\}$, 
$$[\underline{\widetilde{X}}_{i_1},[\underline{\widetilde{X}}_{i_2},[\ldots,\underline{\widetilde{X}}_{i_\ell}]]=[X_{i_1},[X_{i_2},[\ldots,X_{i_\ell}]] \qquad \mod Y_1,\ldots,Y_K.$$
Since the $Y_j$ do not vanish outside $V$, using that the vector fields $X_1,\ldots,X_m$ satisfy Hörmander's bracket-generating condition in $\Omega$, we obtain that the family of vector fields $\widetilde{X}_1,\ldots,\widetilde{X}_m,Y_1,\ldots,Y_K$ satisfies Hörmander's bracket-generating condition outside $V$.

The regularity of the family at $(q,0)$ follows from the fact that in $V$ there holds $\underline{\widetilde{X}}_i=\widetilde{X}_i$ and $Y_j=0$ for any $i\in\{1,\ldots,m\}$ and any $j\in\{1,\ldots,K\}$. By definition of the desingularized vector fields $\widetilde{X}_i$, they form an equiregular family, so in particular a regular family at $(q,0)$.\\
The non-characteristic boundary condition on $\partial(\Omega\times \T^K)$ follows from the non-characteristic boundary condition satisfied by the vector fields $X_i$ on $\partial \Omega$ (Assumption \ref{a:noncarac}) and the fact that $\underline{\widetilde{X}}_i=X_i$ on $\partial(\Omega\times \mathbb{T}^K)$.
 \end{proof}
 
 Let $\varphi_\lambda(x)$ be an eigenfunction of $-\Delta_\Omega$, with eigenvalue $\lambda$. We consider $\psi_\lambda:\Omega\times\mathbb{T}^K\rightarrow \R$ defined by $\psi_\lambda(x,y)=\varphi_\lambda(x)$. This is an eigenfunction of $-\widetilde{\Delta}$ with eigenvalue $\lambda$. 
 
 We apply the arguments of the beginning of Section \ref{s:endofth2} to $\widetilde{\Delta}$: they imply that there exist a neighborhood $\widetilde{V}\subset \Omega\times \T^K$ and a constant $c(\widetilde{V})>0$ independent of $\lambda$ such that 
\begin{equation}\label{e:Zpsi}
\forall \widetilde{q}\in \widetilde{V},\ \forall \e \geq c(\widetilde{V})\lambda^{-1/2}, \quad Z_{\psi_\lambda}\cap \widetilde{B}_\e(\widetilde{q})\neq \emptyset.
\end{equation}
These sR balls are computed with the vector fields defining $\widetilde{\Delta}$, and these vector fields coincide near $(q,0)$ with $\widetilde{X}_i$ thanks to \eqref{e:theunderlined}; hence it is equivalent to compute the sR balls with the vector fields $\widetilde{X}_i$ since we are considering small balls near $(q,0)$, with radius much smaller than $\delta$. By the projection property \eqref{e:projball}, since $Z_{\psi_\lambda}=Z_{\varphi_\lambda}\times \mathbb{T}^K$, we finally obtain that 
\begin{equation}\label{e:Zpsi}
\forall \widetilde{q}\in \widetilde{V},\ \forall \e \geq c(\widetilde{V})\lambda^{-1/2}, \quad Z_{\varphi_\lambda}\cap B_\e(\pi(\widetilde{q}))\neq \emptyset
\end{equation}
where $\pi:\Omega\times \mathbb{T}^K$ is the canonical projection. The constant involved in \eqref{e:Zpsi} is thus uniform in a neighborhood of $q$.
Since this uniformity is true in a neighborhood of any point $q\in M$ (either regular or singular), using the compactness of $\overline{\Omega}$ we obtain the result.

\begin{remark}
One could wonder why we do not simply consider in the proof the sub-Laplacian $-\sum_{i=1}^m\widetilde{X}_i^*\widetilde{X}_i$ on $U\times \R^K$ instead of $\widetilde{\Delta}$ (the adjoint being computed with respect to $\widetilde{\mu}=\mu\otimes \mathscr{L}_{\R^K}$, where $\mathscr{L}_{\R^K}$ is the Lebesgue measure on $\R^K$). In fact this does not work for our purposes, since the formula $\psi_\lambda(x,y)=\varphi_\lambda(x)$ does not define an $L^2(U\times\R^K)$ eigenfunction.
\end{remark}

\subsection{An example}\label{s:optimal}
In this section we illustrate Theorem \ref{t:density} with an example.\\ 
Fix $\alpha\in\N^*$ and consider the generalized Baouendi-Grushin sub-Laplacian $\Delta_{\rm BG}=\partial_x^2+x^{2\alpha}\partial_y^2$ on $(-1,1)_x\times \mathbb{T}_y$. For $k\in\Z$ we denote by $\psi_k$ a non-trivial element of the lowest energy eigenspace of the 1D operator $H_k=-\partial_x^2+k^2x^{2\alpha}$ on $(-1,1)_x$. The associated eigenvalue satisfies
 $$
c_\alpha|k|^{2/(\alpha+1)}\leq \mu_k\leq C_\alpha|k|^{2/(\alpha+1)}
 $$ 
 as $k\rightarrow +\infty$ for some constants $c_\alpha,C_\alpha>0$. Then $\Psi_k:(x,y)\mapsto\psi_k(x)\cos(ky)$ is an eigenfunction of $-\Delta_{\rm BG}$ with eigenvalue $\mu_k$. Its nodal set is
$$Z_{\Psi_k}=(-1,1)_x\times \left(\bigcup_{n\in\Z}\pi\frac{n}{k}\Z\right)_y$$
 i.e. it is a union of ``horizontal'' lines separated by $\pi/k$ (in Euclidean distance). \\
 Using the ball-box theorem \eqref{e:bboxthe}, the sR ball centered at a point $(0,y_0)$ on the singular line, and of radius $\e$, can be compared with ball-boxes of the form $[-C\e,C\e]_x\times [y_0-(C\e)^{\alpha+1},y_0+(C\e)^{\alpha+1}]_y$: the weights $w_i$ in this case are $w_1=1$ and $w_2=\alpha+1$. The sR ball is more squeezed in the $y$ direction due to the fact that brackets are needed to span this direction; we refer the reader to \cite[Section 3.1]{bellaiche1996tangent} and the picture in \cite[Section 3.3]{bellaiche1996tangent} for the case $\alpha=1$. Hence, the statement that any sR ball of radius $c/\sqrt{\mu_k}$ intersects the nodal set of $\Psi_k$ is true for $c$ large enough but false for $c$ small enough. This proves the sharpness of Theorem \ref{t:density}. 
 
 \section{Proof of Theorem \ref{t:yau}}\label{s:yau}
 We use the setting defined in Example \ref{exHeis} for $d=1$ (and we drop the indices: $x,y,z$ replace $x_1,y_1,z_1$): we consider $M =\Gamma\backslash \mathbf{H}_1$, with coordinates $x,y,z$, and endowed with the Lebesgue measure $\mu=dxdydz$. We denote by $\Omega$ the open subset of $M$ containing all points $(x,y,z)\in M$ such that $x\notin \sqrt{2\pi}(\Z+\frac12)$ (the factor $\frac12$ becomes clear in the proof). The boundary condition is then at $x=\pm \sqrt{\frac{\pi}{2}}\, (\text{mod} \sqrt{2 \pi})$.

We consider the vector fields $X=\partial_x$ and $Y=\partial_y-x\partial_z$ on $M$ (see Example \ref{exHeis}). The sub-Laplacian is $$\Delta=-X^*X-Y^*Y=X^2+Y^2.$$
Proposition \ref{p:eigpb} applies; we denote by $(\Delta_{\Omega}, \mathcal{D}(\Delta_{\Omega}))$ the domain of $\Delta$ acting on functions on $\Omega$ (with Dirichlet boundary conditions). We now proceed to explicit computations.

For $c\in \R$ we introduce the sets
\begin{equation*}
A_c=\left\{(x,y,z)\in \Omega\ | \ y=c\right\}, \qquad B_c=\left\{(x,y,z) \in \Omega\ | \ z=c\right\}.
\end{equation*}  
We make the following observations:
\begin{itemize}
\item All these hypersurfaces have Hausdorff dimension $3$ (see \cite[Section 0.6.C]{gromov}). 
\item All hypersurfaces $A_c$ have the same 3D Hausdorff measure, which we denote by $a$. This follows from the fact that for any $t\in\R$, the multiplication on the left by $(0,t,0)$ is an isometry, which sends $(x,y,z)$ to $(x,y+t,z)$, and thus $A_c$ to $A_{c+t}$ for any $c\in\R$. Since it is an isometry, it preserves balls and Hausdorff measures, and thus $A_c$ and $A_{c+t}$ have the same 3D Hausdorff measure, for any $c,t\in\R$.
\item All hypersurfaces $B_c$ have the same 3D Hausdorff measure, which we denote by $b$. This follows from the fact that for any $t\in\R$, the multiplication on the left by $(0,0,t)$ is an isometry, which sends $(x,y,z)$ to $(x,y,z+t)$, and thus $B_c$ to $B_{c+t}$ for any $c\in\R$. Since it is an isometry, it preserves balls and Hausdorff measures, and thus $B_c$ and $B_{c+t}$ have the same 3D Hausdorff measure, for any $c,t\in\R$.
\end{itemize}

We consider two sequences of eigenfunctions of $\Delta_\Omega$; it is not difficult to check that they are indeed well-defined on $\Omega$ and that they satisfy Dirichlet boundary conditions. 

We first consider eigenfunctions of the form $\varphi_{1,m}(x,y,z)=\sin(\sqrt{2\pi}x)\sin(\sqrt{2\pi}my)$ for $m\in\Z_{\geq 0}$, for which the corresponding eigenvalue is $\lambda_{1,m}=2\pi (1+m^2)$. The nodal set in $\Omega$\footnote{since $x\in (-\sqrt{\pi/2},\sqrt{\pi/2})$, $x=0$ is the only nodal set coming from $\sin(\sqrt{2\pi}x)$.} is the disjoint union of $\{x=0\}$ (whose Hausdorff measure is denoted by $a_0$) with the sets $A_c$ for $c=\frac{k}{m}\sqrt{\frac{\pi}{2}}$, $k\in \{-m+1,\ldots, m-1\}$. The Hausdorff measure of this nodal set is $a_0+(2m-1)a$ which is bounded above by  $Ca\sqrt{\lambda_{1,m}}$ for some $C>0$ independent of $m$.

Secondly, we consider for $m\in \Z_{\geq 0}$ the eigenfunction defined by extending to $\Omega$ by periodization with the group law \eqref{e:grouplaw} the function on the fundamental cell $(-\sqrt{\frac{\pi}{2}},\sqrt{\frac{\pi}{2}})\times [-\sqrt{\frac{\pi}{2}},\sqrt{\frac{\pi}{2}})\times [-\pi,\pi)$ given by $\varphi_{2,m}(x,y,z)=\psi_m(x)\sin(mz)$, where $\psi_m$ denotes a non-null element of the first eigenspace of the 1D operator $-d^2/dx^2+m^2x^2$ on $(-\sqrt{\frac{\pi}{2}},\sqrt{\frac{\pi}{2}})$ with Dirichlet boundary conditions. The first eigenvalue $\lambda_{2,m}$ of this harmonic oscillator, which is also the eigenvalue associated to $\varphi_{2,m}$, is $m+o(1)$ as $m\rightarrow +\infty$. Since $\psi_m$ does not vanish, the nodal set of $\varphi_{2,m}$ is the (disjoint) union of the sets $B_c$ for $c=k\pi/m$, $k\in \{-m+1,\ldots, m-1\}$. Its Hausdorff measure is $(2m-1)b$ which is bounded below by $C\lambda_{2,m} b$ for $m\neq 0$ and some $C>0$ independent of $m$.

This completes the proof of Theorem \ref{t:yau}.

\end{document}